\documentclass{amsart}

\usepackage{amssymb}
\usepackage{graphicx}
\usepackage{tikz}
\usepackage{enumitem}
\usepackage{physics}
\usepackage{hyperref}

%%%% personal commands
\newcommand{\RR}{\mathbb{R}}

\newcommand{\KK}{\mathbb{K}}
\newcommand{\BB}{\mathcal{B}}

\newcommand{\C}{\mathrm{\check{C}}}
\newcommand{\VR}{\mathrm{VR}}

\newcommand{\Dirsum}{\bigoplus}

\newcommand{\compose}{\circ}

\DeclareMathOperator{\Rep}{Rep}
\DeclareMathOperator{\Hom}{Hom}

\DeclareMathOperator{\codim}{codim}
\DeclareMathOperator{\coker}{coker}
\DeclareMathOperator{\Qcodim}{Qcodim}
\DeclareMathOperator{\PH}{PH}

\DeclareRobustCommand{\stodd}{\text{\reflectbox{$\ddots$}}}

%%%% normal theorem style
\newtheorem{introthm}{Main Theorem}
\newtheorem{thm}{Theorem}[section]

\newtheorem{lem}[thm]{Lemma}

%%%% other style, without italicized words
\theoremstyle{definition}
\newtheorem{ex}[thm]{Example}
\newtheorem{rem}[thm]{Remark}
\newtheorem{defn}[thm]{Definition}
\newtheorem{question}{Question}

\title{Stabilization of codimension of persistence barcodes}
\author[J. Allman]{Justin Allman}
    \address{Department of Mathematics, Wake Forest University, Winston-Salem, NC, USA}
    \email{allmanjm@wfu.edu}
\author[A. Huang]{Anran Huang}
    \address{Department of Mathematics, University of Michigan, Ann Arbor, MI, USA}
    \email{huana@umich.edu}

\subjclass[2020]{62R40, 55N31, 16G20}

\begin{document}

\begin{abstract}
Given a pointwise finite-dimensional persistence module over a totally ordered set $S$, a theorem of Crawley-Boevey guarantees the existence of a barcode. When the set $S$ is finite, the persistence module is an equioriented type-A quiver representation and the barcode identifies a distinguished point in an algebraic variety. We prove a formula for the codimension of this variety inside an ambient space of comparable representations which depends only on the combinatorics of the barcode. We therefore extend the notion of codimension to persistence modules over any set $S$ and prove that this extension is well-defined and can be effectively computed via a stabilization property of approximating quiver representations. Further, we prove realization theorems by constructing explicit examples via persistent homology of data for which the codimension can realize any natural number as its value.
\end{abstract}

\maketitle

\section{Introduction}
    \label{s:intro}

Fix a field $\KK$ and a partially ordered set $S$. A \emph{persistence module over $S$}, or an \emph{$S$-persistence module}, consists of
	\begin{enumerate}[leftmargin = *, label=(\roman*)]
	\item an indexed collection of $\KK$-vector spaces $\left(V_s ~|~ s\in S \right)$
	\item together with a collection of $\KK$-linear mappings $\left(f_{st}: V_s \to V_t~|~ s\leq t \right)$
	\end{enumerate}
where $f_{ss}$ is the identity map on $V_s$ and where we require the composition law $f_{tu}\compose f_{st} = f_{su}$ whenever $s,t,u\in S$ with $s \leq t \leq u$. When $V$ is an $S$-persistence module and $S' \subseteq S$, we obtain an $S'$-persistence module called the \emph{restriction} of $V$ to $S'$, denoted $V|S'$ by considering only those spaces and maps with indices in $S'$. 

Persistence modules are a central mathematical object of study in the burgeoning field of \emph{topological data analysis} (TDA) due to their appearance in computations for the \emph{persistent homology} (PH) of data; see Section \ref{s:PH} for examples. In this paper we introduce a new flavor of stability for persistence modules by considering a special class of restrictions which closely approximate a given persistence module. 

The traditional (and extremely useful!) notion of stability in persistence theory (PT) asserts that important ``signatures" (e.g., barcodes and persistence diagrams) exhibit a statistical robustness. However, when $S' \subseteq S$ is a finite set, algebro-geometric invariants from the representation theory (RT) of quivers can be computed. Morally, we imagine 
\begin{itemize}[leftmargin=*]
    \item \textbf{PT stability}. If two $S$-persistence modules only differ ``slightly'', then their persistence signatures only differ ``slightly''. 
    \item \textbf{RT stability}. For finite subsets $S'\subseteq S$, the value of a representation-theoretic invariant stabilizes on all ``nice enough'' restrictions $V|S'$.
\end{itemize}
We consider a specific representation theory / algebro-geometric invariant called \emph{codimension} for which we are able to give a precise formulation of an RT stability property. To our knowledge, the present work is the first example of any algebro-geometric invariant which admits RT stability.

\subsection{Persistence theory}
Throughout the paper we assume that our persistence modules are \emph{pointwise finite-dimensional}; i.e., all of the vector spaces $V_s$ are finite-dimensional $\KK$-spaces. Furthermore, we will restrict our attention to the case when $S$ is a \emph{totally} ordered set. In fact, in most of our examples $ S \subseteq \RR$. The quantities \[r_{st} = \rank(f_{st}:V_s \to V_t) \text{~for $s\leq t$}\] are called the \emph{persistent Betti numbers}, and 
the assumption that $\dim(V_s)<\infty$ for all $s\in S$ implies \[r_{st} \leq \min(\dim(V_s),\dim(V_t))<\infty \text{~for all $s\leq t$}.\] 

A subset $I \subseteq S$ is an \emph{interval} if $I \neq \emptyset$ and $s \leq t\leq u$ with $s,u\in I$ implies $t\in I$. Corresponding to an interval $I \subseteq S$, we define a distinguished $S$-persistence module, the so-called \emph{interval module} with $V_s = \KK$ whenever $s\in I$ and $V_u = 0$ whenever $u\notin I$. We set $f_{st} = 1$ for $s,t\in I$ with $s\leq t$ (and $f_{st} = 0$ otherwise). A theorem of Crawley-Boevey states 
\begin{thm}[\cite{CB}, Theorem 1.1]
	\label{thm:CB}
Every pointwise finite-dimensional persistence module over a totally ordered set $S$ is a direct sum of {interval modules}. \qed
\end{thm}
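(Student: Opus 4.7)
The plan is a two-stage argument. For \textbf{Stage 1 (finite $S$)}: when $S = \{s_1 < \cdots < s_n\}$ is finite, a pointwise finite-dimensional $S$-persistence module is exactly a finite-dimensional representation of the equioriented type-$A_n$ quiver. By Gabriel's theorem---or more elementarily by induction on $n$ together with Krull--Schmidt---such a representation decomposes uniquely into indecomposables, and these indecomposables are precisely the interval modules $M(I)$ for $I \subseteq S$ an interval. The multiplicity of $M([s_i,s_j])$ moreover admits an explicit formula in terms of the persistent Betti numbers $r_{ij}$ via rank inclusion-exclusion.

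For \textbf{Stage 2 (arbitrary totally ordered $S$)}: the plan is to construct the global decomposition via functorial filtrations. For each $s \in S$, the ascending chain $\{\Img(f_{rs}) : r \leq s\}$ and the descending chain $\{\ker(f_{st}) : t \geq s\}$ live inside the finite-dimensional space $V_s$, so all of their sums and intersections terminate. Suitable sums, intersections, and quotients of these subspaces yield, for each possible ``endpoint type'' of interval (open or closed on each side), a canonical finite-dimensional piece of $V_s$. Choosing splittings identifies, for each interval $I \subseteq S$, a subspace $V_I \subseteq V$ that should serve as a direct summand isomorphic to a direct sum of copies of $M(I)$.

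The main obstacle will be verifying that the subspaces $V_I$ assemble into an honest internal direct sum $V = \bigoplus_I V_I$ of interval modules, rather than merely matching dimensions pointwise. To handle this I would restrict to arbitrary finite $S' \subseteq S$: by the functoriality of the filtration construction and the uniqueness of Krull--Schmidt in Stage~1, the restriction of $V_I$ to $S'$ must coincide with the summand predicted by the Stage~1 decomposition of $V|S'$, and this forces global compatibility of the pointwise splittings with the transition maps $f_{st}$. The pointwise finite-dimensional hypothesis is essential throughout, since it caps the multiplicity of intervals through each $s$ and ensures the defining chains terminate. A cleaner alternative I would consider is to invoke Krull--Schmidt--Azumaya in the category of pointwise finite-dimensional $S$-persistence modules (whose objects have finite endolength), thereby reducing the theorem to showing that every indecomposable pointwise finite-dimensional $S$-persistence module is an interval module---which one would prove by extracting a non-trivial idempotent from the endomorphism ring of any hypothetical non-interval indecomposable.
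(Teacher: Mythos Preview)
The paper does not prove this theorem at all: it is quoted from Crawley-Boevey \cite{CB} and closed with a \qed\ immediately after the statement, with no argument given. There is therefore no ``paper's own proof'' against which to compare your proposal.

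As a standalone sketch, your plan is broadly reasonable, and the alternative you mention at the very end---invoke a Krull--Schmidt--Azumaya type theorem and then show that every indecomposable pointwise finite-dimensional $S$-persistence module is an interval module---is in fact the backbone of Crawley-Boevey's actual argument in \cite{CB}. Your primary Stage~2 route via functorial filtrations is shakier as written: the proposed verification that the pieces $V_I$ assemble into a global internal direct sum by restricting to arbitrary finite $S'\subseteq S$ and appealing to Stage~1 uniqueness is underspecified. Distinct intervals $I\neq J$ of $S$ can restrict to the \emph{same} interval of $S'$, so the Stage~1 decomposition of $V|S'$ does not canonically separate the contributions of $V_I|S'$ and $V_J|S'$; and there is no evident limit or compactness mechanism that upgrades compatibility over all finite $S'$ to a single global splitting of $V$. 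If you pursue this direction you would need an explicit construction of the splittings at each $V_s$ that is manifestly compatible with every $f_{st}$, which is essentially the content of the theorem. The cleaner path is the one you flag last.
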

The collection of these intervals is called the \emph{barcode} of the persistence module, and because of this, in the sequel we will also refer to a single interval in a persistence module decomposition as a \emph{bar}. %The barcode is in fact a multiset, consisting of the intervals of $S$ which occur in the decomposition along with their multiplicity. 
Throughout, for any bar $I \subseteq S$ we denote its \emph{birth} or \emph{birth time} $b(I) = \inf(I)$ and its \emph{death} or \emph{death time} by $d(I) = \sup(I)$. %When $S \subseteq \RR$, the values of $\pm \infty$ are allowed as birth and death times. 
Whenever an interval $I$ satisfies $b(I)=d(I)=a$, we call $I$ a \emph{pointbar}. These occur whenever there exists a nonzero $v\in V_a$ which is simultaneously in $\coker(f_{sa})$ for all $s<a$ and $\ker(f_{at})$ for all $a<t$. 

The barcode provides a useful ``signature" of the corresponding persistence module due to its computability (in most applications) and its aforementioned robust PT-stability property. The most general result on PT-stability known to the authors is Theorem 5.23 and 5.24 of \cite{CdSGO}. 

\subsection{Representation theory}
When $S$ is a finite set, a pointwise finite-dimensional $S$-persistence module is an \emph{equioriented A-type quiver representation}, though in this paper we agree to simply say \emph{quiver representation}. In this setting, the result of Theorem \ref{thm:CB} is a special case of a theorem of Gabriel \cite{Gab}, and the \emph{barcode} is a representative ``normal form" for an entire isomorphism class (aka \emph{isoclass}) of quiver representations \cite{ADF1,ADF2,ADFK}. The isoclasses of quiver representations have an interesting history in their own right (well beyond A-type) and have numerous remarkable combinatorial and geometric properties; see, e.g., \cite{B-F,KMS,BR,Buc,KKR,Rim,All} and references therein.

In this paper, we focus on one such algebro-geometric invariant of the isoclass --- namely, its \emph{codimension} as a variety inside the affine \emph{ambient representation space} 
	\begin{equation}
		\label{eqn:rep}
		\Rep = \Dirsum_{i=1}^{n-1} \Hom\left(V_{s_i},V_{s_{i+1}}\right).
	\end{equation}

\subsection{Summary of results}
A formula for the codimension of an isoclass $\Omega\subset \Rep$ in terms of the persistent Betti numbers has been known since the 1980s. However, no description of the codimension has so far been given explicitly in terms of the combinatorics of the {barcode}. Our first main theorem presents exactly such a description. 

We define two bars to be an \emph{interacting pair} if they intersect inside $S$ in a prescribed way; see our Definition \ref{defn:interacting.pair}. We then prove

\begin{introthm}[Theorem \ref{thm:main1}]
    Given an equioriented type-A quiver representation, the codimension of its isoclass $\Omega \subset \Rep$ is equal to the number of interacting pairs of bars in its barcode.
\end{introthm}

Since Theorem \ref{thm:CB} guarantees that a barcode exists for \emph{any} pointwise finite-dimensional persistence module $V$, by extension, we naturally apply the conclusion of Theorem \ref{thm:main1} to define the \emph{quiver codimension} $\Qcodim(V)$ to be the number of interacting bars in its barcode; see Definition \ref{defn:Qcodim}.   

Our second main theorem is a precise statement of RT-stability for the codimension. In the statement below, we wait to give the full definitions of \emph{barcode-finite} and \emph{$h$-approximation} until the main body of the paper; see Definitions \ref{defn:barcode.finite} and \ref{defn:h.approx}, respectively.

\begin{introthm}[Theorem~\ref{thm:main2}]
    Consider the barcode-finite $S$-persistence module $V$. Suppose that the finite subset $S' \subseteq S$ admits an $h$-approximation $V'=V|S'$. Provided that $h\geq 2$ 
        \[
        \codim(\Omega') = \Qcodim(V)
        \]
    where $\Omega'$ is the isoclass of the quiver representation associated to $V'$.
\end{introthm}
Theorem \ref{thm:main2} guarantees that the quiver codimension of any persistence module can be effectively computed directly from \emph{quiver representations} by taking appropriate finite sets $S' \subseteq S$. The RT-stability property is: the size of $S'$ may grow to better approximate $S$, and hence the number of nodes of the underlying quiver must grow which, in turn, implies that the ambient representation space $\Rep$ from Equation \eqref{eqn:rep} must grow (in its number of direct summands). Yet, remarkably the \emph{codimension stabilizes} for the corresponding isoclasses $\Omega' \subset \Rep$.

\subsection{Organization of the paper}
In Section \ref{s:PH} we describe important examples of persistence modules---those coming from the persistent homology of data. In Section \ref{s:Q.rep.codim.isoclass} we provide the relevant background on quiver representations and codimension of isoclasses. In Section \ref{s:new.formula.codim} we state and prove our new formula for codimension which is native to the combinatorics of barcodes. In Section \ref{s:PMandQcodim} we define the quiver codimension for general persistence modules. In Sections \ref{s:RTstabQcodim} and \ref{s:RTstabQcodim.proof} we state and prove the RT stability property satisfied by the quiver codimension. In Section \ref{s:ExAndRealizationThms} we prove several realization theorems for the quiver codimension statistic for persistent homology of data. Finally, in Section \ref{s:Discuss.Future} we discuss the qualitative interpretation of the quiver codimension statistic for persistent homology, and we suggest future directions for analysis of quiver codimension specifically and RT-stability generally.

\section{Important classes of examples: persistent homology}
    \label{s:PH}

A \emph{filtration} $Z$ of topological spaces indexed by $S$ is a collection of topological spaces $(Z_s~|~s\in S)$ along with inclusions $Z_s \subseteq Z_t$ whenever $s \leq t$. For any non-negative integer $d$, we can set $V_s = H_d(Z_s,\KK)$ to be the \emph{degree $d$ simplicial homology of the space $Z_s$ with coefficients in $\KK$}. Further, whenever $s\leq t$, we obtain induced mappings in degree $d$ homology $f_{st}: V_s \to V_t$ from the inclusions $Z_s\subseteq Z_t$; see e.g., \cite[Section 2.1]{Hat} as a general reference on simplicial homology. The $S$-persistence module so obtained from this $(V_s)$ and this $(f_{st})$ is called the \emph{(degree $d$) persistent homology} (PH) of the filtration and we will denote it by $V = \PH_d(Z)$; see e.g., \cite{Ghr} as a general reference on persistent homology.

\begin{ex}
	\label{ex:sublevel.filt}
Let $Y$ denote any topological space and let $g:Y \to S$. Form the \emph{closed sublevel sets} $Y_s = \{ y \in Y ~|~ g(y) \leq s\}$. Taken together, the natural nestings $Y_s \subseteq Y_t$ whenever $s\leq t$ are called the \emph{sublevel set filtration of the pair $(Y,g)$}. Denoting this filtration by $Y^g$, we obtain an $S$-persistence module $\PH_d(Y^g)$.
\end{ex}

For the next two examples let $\RR_{\geq 0}$ denote the set of non-negative real numbers and assume $S\subseteq \RR_{\geq0}$. Consider a metric space $(M,\rho)$ and for $s\in\RR_{\geq0}$ and $x\in M$, write $B_s(x) = \left\{ y \in M ~|~ \rho(y,x) \leq s \right\}$ to be the ball centered on $x$ of radius $s$. Let $X = \{x_i\}_{i=1}^N$ denote a finite set of data in $M$; i.e., $X$ is a \emph{point cloud in $M$}.

\begin{ex}
	\label{ex:cech.filt}
For each $s\in S$, the \emph{\v{C}ech complex} is the simplicial complex having a $k$-simplex with vertices $x_{i_0},\ldots,x_{i_k}$ whenever $\bigcap_{j=0}^k B_s(x_{i_j}) \neq \emptyset$. The nerve theorem \cite{Bor} implies that the \v{C}ech complex is homotopy equivalent to the union $C_s = \bigcup_{x \in X} B_s(x)$; see e.g., \cite[Section 1.3]{Ghr}. The filtration given by the inclusions $C_s \subseteq C_t$ is called the \emph{\v{C}ech filtration}. Let us denote this filtration by $\C(X,M)$, and again, we obtain an $S$-persistence module $\PH_d(\C(X,M))$.
\end{ex}

\begin{ex}
	\label{ex:VR.filt}
Alternatively, for each $s \in S$, we can define the \emph{Vietoris-Rips (VR) complex} $X_s$ to be the simplicial complex having a $k$-simplex with vertices $x_{i_0},\ldots,x_{i_k}$ whenever $B_s(x_{i_j}) \cap B_s(x_{i_\ell}) \neq \emptyset$ for each pair $1 \leq j < \ell \leq k$. The nested sequence $X_s \subseteq X_t$ whenever $s\leq t$ defines the \emph{Vietoris-Rips (VR) filtration} which we denote by $\VR(X,M)$. Analogous to the previous examples, $\PH_d(\VR(X,M))$ is an $S$-persistence module. 
\end{ex}

The VR filtration is a computationally efficient alternative to the \v{C}ech filtration since only the pairwise distances between the points need be computed. Fortunately, the VR filtration also provides a controlled approximation to the \v{C}ech filtration; see \cite[Theorem 2.5]{dSG} for a precise formulation.

\section{Quivers, representations, isoclasses, and codimension}
    \label{s:Q.rep.codim.isoclass}

The authors suggest both of the books \cite{DW} and \cite{Sch} as general references for the theory of quivers and their representations. Much of the background material of this section is an amalgam of those sources. 

    \subsection{Quiver generalities}
        \label{ss:quiv.general}

    A \emph{quiver} $Q$ is a directed graph with a finite set of vertices $Q_0$ and a finite set of directed edges $Q_1$. The directed edges are called \emph{arrows}; hence the moniker ``quiver''. For any arrow $a\in Q_1$, we denote the vertex at its \emph{head} by $ha\in Q_0$ and at its \emph{tail} by $ta\in Q_0$. A family of distinguished quivers are the so-called \emph{equioriented type-A quivers}, depicted below
    \[
        1 \longrightarrow 2 \longrightarrow \cdots \longrightarrow n
    \]
where $Q_0 = \{1,2,\ldots,n\}$ and each arrow in $Q_1$ has the form $(i) \to (i+1)$. The name is justified since all arrows point in the same direction; i.e., are \emph{equioriented}, and readers familiar with representation theory of Lie algebras will recognize that these quivers are orientations of the \emph{type-A Dynkin diagram} \cite[Section 11]{Hum}.

\subsection{Representations}

A \emph{representation} of a quiver $Q$, or a \emph{quiver representation}, is a choice of $\KK$-vector spaces $V_i$ at each vertex $i\in Q_0$ along with choices of linear mappings $f_a: V_{ta} \to V_{ha}$ for each arrow $a\in Q_1$. The list $\alpha = (\dim(V_i))_{i\in Q_0}$ is called the \emph{dimension vector} of the quiver representation. We observe that a representation of an equioriented type-A quiver is exactly a pointwise finite-dimensional persistence module over the set $S = \{1 < 2 < \ldots < n\}$.
        
\subsection{Isoclasses and orbits}

Fixing the dimension vector $\alpha$, but allowing the mappings $f_a$ to vary, we form the \emph{ambient space of $\alpha$-dimensional quiver representations}
    \begin{equation}
    	\label{eqn:Rep.d.Q}
		\Rep_\alpha(Q) = \Dirsum_{a\in Q_1} \Hom\left(V_{ta},V_{ha}\right)
    \end{equation}
The algebraic group $G_\alpha = \prod_{i\in Q_0} GL(V_i)$ acts naturally on $\Rep_\alpha(Q)$ by simultaneous change of basis in the head and tail of each arrow; in particular, for $g = (g_i)_{i\in Q_0} \in G_\alpha$ and $f = (f_a)_{a\in Q_1} \in \Rep_\alpha(Q)$ we have the left action
\[
g \cdot f = (g_{ha}f_a g_{ta}^{-1})_{a\in Q_1}.
\]
%where the inverse decoration on $g_{ta}$ is a technicality so that the left action is well-defined. 
We will drop the decorations for $\alpha$ and $Q$ from $\Rep_\alpha(Q)$ in the sequel; they will be understood from context. Instead we write, cf.~Equation \eqref{eqn:rep},
	\[
	\Rep = \Dirsum_{i=1}^{n-1} \Hom\left(V_i, V_{i+1}\right).
	\]
It is a classical result of quiver representation theory that two representations are isomorphic (in the category of quiver representations) if and only if they live in the same $G_\alpha$-orbit in $\Rep$ \cite[Corollary 1.3.1]{DW}. Accordingly, we take a $G_\alpha$-orbit in $\Rep$ as the {definition} of an \emph{isomorphism class}, aka \emph{isoclass}, for quiver representations. 

\subsection{Codimension of isoclasses}

Specifically for equioriented type-A representations, the list of persistent Betti numbers $r_{i,j} = \rank(V_i \to V_j)$ for $i<j$ must be constant on any isoclass since basis changes do not alter ranks. Indeed, the isoclasses are in one-to-one correspondence with a choice of persistent Betti numbers \cite{ADF1,ADF2,ADFK}. Taken together, the seminal papers of Abeasis--Del Fra \cite{ADF1,ADF2}, Abeasis--Del Fra--Kraft \cite{ADFK}, and Lakshmibai--Magyar \cite{LM} guarantee that, over any base field $\KK$, the isoclasses in $\Rep$ form algebraic varieties which are Cohen-Macauley, and hence, have pure codimension as subvarieties. We now recall a formula for this codimension, known since the 1980s, in terms of the persistent Betti numbers; see \cite{ADF1}, \cite[Proposition 1]{LM}, and \cite[Equation (1.3)]{B-F}.

\begin{thm} 
	\label{thm:old.codim}
Given an isoclass $\Omega \subset \Rep$, we have
	\[
        \pushQED{\qed}
	\codim(\Omega) = \sum_{i<j} (r_{i,j-1}-r_{i,j})(r_{i+1,j}-r_{i,j}).\qedhere
        \popQED
	\]
\end{thm}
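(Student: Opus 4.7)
The plan is to compute the codimension via orbit--stabilizer, translate the result into a homological invariant of the representation $V$ corresponding to a point $f \in \Omega$, and then match that invariant against the stated rank sum by decomposing $V$ into interval modules.

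First, since $\Omega = G_\alpha \cdot f$, orbit--stabilizer gives $\dim\Omega = \dim G_\alpha - \dim\mathrm{Stab}_{G_\alpha}(f)$. The stabilizer is $\mathrm{Aut}_Q(V)$, a nonempty Zariski-open subset of the affine space $\mathrm{End}_Q(V)$, so $\dim \mathrm{Stab}_{G_\alpha}(f) = \dim \mathrm{End}_Q(V)$. A direct bookkeeping then yields
\[
\codim(\Omega) \;=\; \dim\Rep - \dim G_\alpha + \dim\mathrm{End}_Q(V) \;=\; \dim\mathrm{End}_Q(V) - \langle\alpha,\alpha\rangle,
\]
where $\langle\alpha,\beta\rangle = \sum_i \alpha_i\beta_i - \sum_{i<n} \alpha_i\beta_{i+1}$ is the Euler form of the quiver. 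Because the path algebra of an equioriented type-$A$ quiver is hereditary, the Ringel identity converts this into $\codim(\Omega) = \dim\mathrm{Ext}^1_Q(V,V)$, so it suffices to compute that Ext-dimension.

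Next, I would exploit the Krull--Schmidt decomposition $V \cong \Dirsum_{[a,b]} M_{[a,b]}^{m_{a,b}}$. A direct analysis of morphisms between two interval modules, tracking the scalar a morphism may carry across the overlap against the boundary conditions at the two endpoints, gives $\dim\Hom_Q(M_{[a,b]},M_{[c,d]}) = 1$ iff $c \leq a \leq d \leq b$, and is zero otherwise. Combined with the Euler form, this forces $\dim\mathrm{Ext}^1_Q(M_{[a,b]},M_{[c,d]}) = 1$ precisely when $a < c$, $b < d$, and $c \leq b+1$, and zero otherwise. Consequently $\dim\mathrm{Ext}^1_Q(V,V)$ equals $\sum m_{a,b}\, m_{c,d}$ summed over integer tuples satisfying those three inequalities.

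Finally, I would reconcile this count with the rank formula. Grouping intervals by birth or death yields
\[
r_{i,j-1} - r_{i,j} \;=\; \sum_{a \leq i} m_{a,j-1}, \qquad r_{i+1,j} - r_{i,j} \;=\; \sum_{d \geq j} m_{i+1,d},
\]
so each summand $(r_{i,j-1}-r_{i,j})(r_{i+1,j}-r_{i,j})$ counts pairs of bars $([a,j-1],[i+1,d])$ with $a \leq i$ and $d \geq j$. Summing over $i < j$ and substituting $(i,j) = (c-1,b+1)$, the inequalities $a \leq i$, $d \geq j$, and $i < j$ become exactly $a < c$, $d > b$, and $c \leq b+1$, reproducing the Ext-count term by term.

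The main obstacle I anticipate is the Ext computation for interval modules, particularly the boundary case $c = b+1$ of two adjacent (non-overlapping) intervals that nevertheless contribute a nontrivial extension; the Euler-form shortcut avoids building an explicit projective resolution but still requires this case to be tracked. Once the three interval inequalities are correctly established, the substitution $(i,j) = (c-1,b+1)$ supplies a clean bijection between the two sums.
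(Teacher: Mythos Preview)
The paper does not prove this statement at all: it is quoted as a known result from the 1980s with references to Abeasis--Del~Fra, Lakshmibai--Magyar, and Buch--Feh\'er, and the theorem carries an embedded \verb|\qed| with no argument. Your proposal, by contrast, supplies a complete and correct proof along the standard lines (orbit--stabilizer, the identity $\codim(\Omega)=\dim\mathrm{Ext}^1_Q(V,V)$ for hereditary path algebras, and the explicit Hom/Ext computation between interval modules). The bijection you set up via $(i,j)=(c-1,b+1)$ is exactly right, and your flagged boundary case $c=b+1$ is genuine and handled correctly by the Euler-form shortcut.

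It is worth noting that your argument does more than reprove Theorem~\ref{thm:old.codim}: the Ext condition you isolate, namely $a<c$, $b<d$, and $c\le b+1$, is precisely the paper's notion of an \emph{interacting pair} (the first two inequalities with $c\le b$ give the interlaced case, while $c=b+1$ is the non-separated case over $S=\{1,\ldots,n\}$). So your computation simultaneously establishes the paper's Theorem~\ref{thm:main1}, and in fact identifies $\Qcodim(V)$ with $\dim\mathrm{Ext}^1_Q(V,V)$, a homological interpretation the paper does not make explicit.
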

Organize the persistent Betti numbers in a triangular \emph{rank array} \cite[Section 2]{B-F}.
\[
\mqty{
r_{11} && r_{22} && r_{33} && \cdots && r_{nn} \\
& r_{12} && r_{23} && \cdots && r_{n-1,n} & \\
&& r_{13} && \cdots && r_{n-2,n} && \\
&&& \ddots && \stodd &&& \\
&&&& r_{1n} &&&& \\
}
\]
We note that $\dim(V_i) = r_{ii}$ and observe that the rank conditions $r_{i,j} = \rank(V_i \to V_j)$ only make sense and define a meaningful quiver representation if \[r_{i,j}\leq \min(r_{i,j-1},r_{i+1,j})\] for all $i<j$. That is, the triangular rank array must be weakly decreasing when moving downward along all diagonals. If we define the triangle of integers $T_{i,j}$ to be the local portion of the rank array depicting
\[
\mqty{
r_{i,j-1} && r_{i+1,j} \\
& r_{i,j} & 
}
\]
and define the \emph{weight} $w(T) =(m-r)(n-r)$ for any triangle of integers
\[
T = \left(\mqty{
m && n \\
& r & 
}\right)
\]
we see that the rank array allows us to compute the codimension by the formula
\begin{equation}
		\label{eqn:codim.old.again}
\codim(\Omega) = \sum_{i<j} w(T_{i,j}).
\end{equation}

\begin{ex}
    \label{ex:A4.quiver}
Let $Q=A_4$ and consider a representation isomorphic to one having the barcode below.
\begin{center}
    \begin{tikzpicture}
        \node[inner sep = 0pt] at (0,0) (11) {$\bullet$};
        \node[inner sep = 0pt] at (1,0) (12) {$\bullet$};
        \node[inner sep = 0pt] at (2,0) (13) {$\bullet$};

        \node[inner sep = 0pt] at (0,0.5) (21) {$\bullet$};
        \node[inner sep = 0pt] at (1,0.5) (22) {$\bullet$};
        \node[inner sep = 0pt] at (2,0.5) (23) {$\bullet$};

        \node[inner sep = 0pt] at (0,1) (31) {$\bullet$};
        \node[inner sep = 0pt] at (1,1) (32) {$\bullet$};
        \node[inner sep = 0pt] at (2,1) (33) {$\bullet$};
        \node[inner sep = 0pt] at (3,1) (34) {$\bullet$};

        \node[inner sep = 0pt] at (2,1.5) (41) {$\bullet$};
        \node[inner sep = 0pt] at (3,1.5) (42) {$\bullet$};

        \draw (11) -- (12) -- (13);
        \draw (21) -- (22) -- (23);
        \draw (31) -- (32) -- (33) -- (34);
        \draw (41) -- (42);
    \end{tikzpicture}
\end{center}
We can see that the dimension vector of this representation is $\alpha=(3,3,4,2)$, where we think of the dots in each column as representing an ordered basis (from top to bottom) for vector spaces at each vertex. In turn, we have a representation
\[
\KK^3 \to \KK^3 \to \KK^4 \to \KK^2
\]
and the given barcode corresponds to the specific point in the isoclass with maps at each arrow respectively given by the matrices
\[
\mqty[\imat{3}], \qquad 
\mqty[0 & 0 & 0 \\ 1 & 0 & 0 \\ 0 & 1 & 0 \\ 0 & 0 & 1], \qquad
\text{and} \qquad \mqty[1 & 0 & 0 & 0\\ 0 & 1 & 0 & 0].
\]
The rank array (aka persistent Betti numbers) corresponding to the isoclass $\Omega$ to which this representation belongs is
\[
\mqty{
3 && 3 && 4 && 2 \\
& 3 && 3 && 2 & \\
&& 3 && 1 && \\
&&& 1 &&& \\
}
\]
Only the triangle $T_{2,4}$ has a nonzero weight, and we can compute
\[
\codim(\Omega) = w(T_{2,4}) = w\left(\mqty{
3 && 2 \\
& 1 & 
}\right) = (3-1)(2-1) = 2.
\]
\end{ex}

\section{A new formula for the codimension of an isoclass}
    \label{s:new.formula.codim}

For any index set $S$, given an interval $I \subseteq S$ with birth $b$ and death $d$, we will write $I = \langle b,d \rangle $. We adopt the convention, here and throughout the paper, that angle brackets in our interval notation can be interpreted either as inclusion or exclusion of the endpoints. The results that follow are independent of this choice. However, in Section \ref{s:RTstabQcodim.proof}, we will require closed brackets for pointbars $[a,a]$. Further, recall that we will use the words ``bar" and ``interval" interchangeably.  Suppose that $I=\langle b,d \rangle $ and $J=\langle \beta,\delta \rangle$. Without loss of generality, assume that $b \leq \beta$.

\begin{defn}
	\label{defn:interacting.pair}
We say that $I$ and $J$ form an \emph{interacting pair} of bars if either of the following conditions apply.
	\begin{enumerate}[label=(\roman*)]
	\item $b < \beta \leq d < \delta$.
	\item $d < \beta$, but there is no element of $S$ between $d$ and $\beta$.
	\end{enumerate}
In Case (i) we say that $I$ and $J$ are \emph{interlaced}; in Case (ii) we say that $I$ and $J$ are \emph{non-separated}.
\end{defn}

\begin{thm}
	\label{thm:main1}
Given an equioriented type-A quiver representation, the codimension of its isoclass $\Omega \subset \Rep$ is equal to the number of interacting pairs of bars in its barcode.
\end{thm}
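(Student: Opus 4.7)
The plan is to reduce to Theorem \ref{thm:old.codim} and reinterpret each summand $w(T_{i,j}) = (r_{i,j-1}-r_{i,j})(r_{i+1,j}-r_{i,j})$ as a count of interacting pairs of a very particular kind. The key observation I will use is that for an equioriented type-A quiver representation decomposed into bars, the persistent Betti number $r_{i,j}$ (with $i\leq j$) equals the number of bars $I$ in the barcode with $b(I)\leq i$ and $d(I)\geq j$. This follows because each interval module $\KK_I$ contributes $1$ to $r_{i,j}$ exactly when both $i$ and $j$ lie in $I$, and bars act independently under direct sum.

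Using this interpretation, I would next show that $r_{i,j-1}-r_{i,j}$ counts the bars $I$ with $b(I)\leq i$ and $d(I)=j-1$, while $r_{i+1,j}-r_{i,j}$ counts the bars $J$ with $b(J)=i+1$ and $d(J)\geq j$. Hence the product $w(T_{i,j})$ counts ordered pairs $(I,J)$ of bars satisfying $b(I)\leq i<i+1=b(J)$ and $d(I)=j-1<j\leq d(J)$. In particular the first bar has strictly smaller birth, so no pair is double-counted from swapping.

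The next step is to set up the bijection from such pairs to interacting pairs in the sense of Definition~\ref{defn:interacting.pair}, parametrized by $(i,j)$ with $i<j$. Given an arbitrary interacting pair $(I,J)$ with $b(I)<b(J)$, define $i=b(J)-1$ and $j=d(I)+1$. In the interlaced case (i), the inequalities $b(I)<b(J)\leq d(I)<d(J)$ translate exactly into $b(I)\leq i$, $b(J)=i+1$, $d(I)=j-1$, $d(J)\geq j$, together with $j\geq i+2$; in the non-separated case (ii), consecutiveness of $d(I)$ and $b(J)$ in the finite set $S$ forces $j=i+1$, and the same four equalities/inequalities hold trivially. Conversely, any ordered pair $(I,J)$ counted by $w(T_{i,j})$ with $j>i+1$ satisfies case (i), and with $j=i+1$ satisfies case (ii). This produces the required bijection, and summing $w(T_{i,j})$ over $i<j$ therefore totals the number of interacting pairs, which matches the formula in Equation \eqref{eqn:codim.old.again}.

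The only real subtlety is the boundary case $j=i+1$, where the product $w(T_{i,i+1})$ counts pairs $(I,J)$ with $d(I)=i$ and $b(J)=i+1$, i.e.\ bars that are \emph{adjacent} in $S$; this is precisely why Definition~\ref{defn:interacting.pair} includes the non-separated case, and the identification goes through cleanly because in the finite setting $S=\{1<2<\cdots<n\}$ ``no element of $S$ between $d$ and $\beta$'' is equivalent to $\beta=d+1$. Beyond this bookkeeping, I expect no substantive obstacle: the proof is essentially a careful combinatorial unpacking of the classical codimension formula. As a sanity check I would verify that Example~\ref{ex:A4.quiver} has exactly two interacting pairs, consistent with $\codim(\Omega)=2$ computed there.
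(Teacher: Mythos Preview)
Your proposal is correct and follows essentially the same approach as the paper: both arguments interpret $r_{i,j}$ as the number of bars containing $\{i,\ldots,j\}$, deduce that $w(T_{i,j})$ counts pairs $(I,J)$ with $d(I)=j-1$, $b(I)\leq i$, $b(J)=i+1$, $d(J)\geq j$, and then split the sum according to whether $j-i\geq 2$ (interlaced) or $j-i=1$ (non-separated). Your explicit bijection via $i=b(J)-1$, $j=d(I)+1$ is just a slightly more formal packaging of the same partition the paper uses.
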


\begin{proof}
We observe that for each $s<t$, the persistent Betti numbers have the property that
	\begin{equation}
		\label{eqn:rst}
	r_{st} = \#\{\text{bars~}I \text{~with~} b(I)\leq s \text{~and~} d(I) \geq t\}.
	\end{equation}
and recall that in the case of quiver representations, we consider a pointwise finite-dimensional persistence module over the finite set $S = \{1<\cdots<n\}$ where $n$ is the number of nodes of the quiver. It follows that 
	\begin{enumerate}[label=(\alph*),leftmargin=*]
	\item $(r_{i,j-1}-r_{i,j})$ counts bars with birth $\leq i$ but death \emph{exactly equal to} $j-1$; 
	\item $(r_{i+1,j}-r_{i,j})$ counts bars with birth \emph{exactly equal to} $i+1$ but death $\geq j$.
	\end{enumerate} 
    
First, consider an \emph{interlaced} pair $\langle b,d \rangle$ and $\langle \beta,\delta \rangle$ with $\beta = i+1$ and $d=j-1$. The interlaced criterion guarantees that $b < i+1 \leq j-1 < \delta$. Hence, we must have $j-i \geq 2$ as well as $b \leq i$ and $\delta \geq j$. For a fixed choice of $i$ and $j$, using observations (a) and (b) following Equation \eqref{eqn:rst}, we conclude that the number of pairs of interlaced bars of the form $\langle b, j-1\rangle$ and $\langle i+1,d\rangle$ is the product of the quantity
\[{\left( r_{i,j-1} - r_{i,j} \right)} = {\#\{\text{bars with $b\leq i$,  $d=j-1$}\}}\]
with the quantity
\[{\left( r_{i+1,j} - r_{i,j} \right)} = {\#\{\text{bars with $\beta=i+1$,  $\delta \geq j$}\}}.\]
Observe that this product is exactly $w(T_{i,j})$. In other words, the sum $\sum w(T_{i,j})$ taken over pairs with $j-i \geq 2$ computes the total number of interlaced pairs of bars.

Second, consider a \emph{non-separated} pair $\langle b,d \rangle$ and $\langle \beta,\delta \rangle$. The non-separated criterion implies that $d=k$ and $\beta=k+1$ for some integer $k \in \{1,\ldots,n-1\}$. For this $k$, and again using observations (a) and (b), the number of non-separated pairs of the form $\langle b,k\rangle$ and $\langle k+1,d\rangle$ is the product of the quantity
\[{\left( r_{k,k} - r_{k,k+1} \right)} = \#\{\text{bars with $b\leq k$, $d=k$}\}\]
with the quantity
\[{\left( r_{k+1,k+1} - r_{k,k+1} \right)} = \#\{\text{bars with $\beta=k+1$, $\delta\geq k+1$}\}.\]
This product is $w(T_{k,k+1})$. In other words, the sum $\sum w(T_{i,j})$ taken over pairs with $j-i=1$ computes the total number of non-separated pairs of bars.

Combining these two computations, we obtain that
\begin{align*}
\#\{\text{interacting pairs}\} 
	&= \#\{\text{interlaced pairs}\} + \#\{\text{non-separated pairs}\} \\
	& = \left[\sum_{j-i\geq 2} w(T_{i,j})\right] + \left[\sum_{j-i = 1} w(T_{i,j})\right] = \sum_{i<j} w(T_{i,j}).
\end{align*}
According to Equation \eqref{eqn:codim.old.again}, this last expression is equal to $\codim(\Omega)$.
\end{proof}

\begin{ex}
	\label{ex:codim.A4.with.new.thm}
Recall the barcode from Example \ref{ex:A4.quiver}.
\begin{center}
    \begin{tikzpicture}
        \node[inner sep = 0pt] at (0,0) (11) {$\bullet$};
        \node[inner sep = 0pt] at (1,0) (12) {$\bullet$};
        \node[inner sep = 0pt] at (2,0) (13) {$\bullet$};

        \node[inner sep = 0pt] at (0,0.5) (21) {$\bullet$};
        \node[inner sep = 0pt] at (1,0.5) (22) {$\bullet$};
        \node[inner sep = 0pt] at (2,0.5) (23) {$\bullet$};

        \node[inner sep = 0pt] at (0,1) (31) {$\bullet$};
        \node[inner sep = 0pt] at (1,1) (32) {$\bullet$};
        \node[inner sep = 0pt] at (2,1) (33) {$\bullet$};
        \node[inner sep = 0pt] at (3,1) (34) {$\bullet$};

        \node[inner sep = 0pt] at (2,1.5) (41) {$\bullet$};
        \node[inner sep = 0pt] at (3,1.5) (42) {$\bullet$};

        \draw (11) -- (12) -- (13);
        \draw (21) -- (22) -- (23);
        \draw (31) -- (32) -- (33) -- (34);
        \draw (41) -- (42);
    \end{tikzpicture}
\end{center}
We see only two pairs of interacting bars. Labeling from the top to bottom, the first/third bars and the first/fourth bars are interlaced. Hence, as in Example \ref{ex:A4.quiver}, we conclude that $\codim(\Omega) = 2$.
\end{ex}

     In the quiver representation literature, the barcode is called a \emph{lacing diagram}. In a seminal 2005 paper \cite{KMS}, Knutson--Miller--Shimozono introduced the notion of an \emph{extended} lacing diagram and showed that the codimension of an isoclass could be computed by counting crossing strands in a \emph{minimal} extension; we will not define the minimality condition in the present paper, but see \cite[Theorem~3.8]{KMS} and \cite[Section~2]{BFR} for details. While the Knutson--Miller--Shimozono formulation is combinatorial, our Theorem \ref{thm:main1} requires \emph{only} the knowledge of the birth and death times to compute the codimension, and not any knowledge of a special extension of the barcode. Nevertheless, it would be interesting to compare our Theorem \ref{thm:main1} with the construction of a minimal extended lacing diagram; however, this setting is too general for the TDA framework and investigating it is beyond the scope of the current paper.

\section{Persistence modules and Qcodim}
    \label{s:PMandQcodim}

Let $V$ denote a pointwise finite-dimensional $S$-persistence module. The work of Crawley-Boevey \cite[Theorem 1.1]{CB}, which we have recalled in the present paper as Theorem \ref{thm:CB} of the introduction, implies that
\begin{equation}
	\label{eqn:V.interval.decomp}
V \cong \Dirsum_{I \subseteq S} \KK_I^{m_I}
\end{equation}
where the sum is taken over intervals $I \subseteq S$, $\KK_I$ denotes the corresponding interval module, and $m_I$ denotes the multiplicity of $\KK_I$ in the decomposition. 

\begin{defn}
	\label{defn:barcode.finite}
A pointwise finite-dimensional $S$-persistence module $V$ is \emph{barcode-finite} if $\sum_{I\subseteq S} m_I < \infty$.
\end{defn}

In other words, $V$ is \emph{barcode-finite} when the number of interval modules in its decomposition (counted with multiplicity) is finite; i.e., if the total number of \emph{bars} in its \emph{barcode} is finite. Even if $V$ is a persistence module over an infinite indexing set $S$, as long as $V$ is barcode-finite, we use the language of Theorem \ref{thm:main1} to extend the idea of codimension to this more general setting.

\begin{defn}
	\label{defn:Qcodim}
Let $V$ be a barcode-finite $S$-persistence module. The \emph{quiver codimension} of $V$, denoted $\Qcodim(V)$, is the number of interacting pairs of bars in the barcode of $V$.
\end{defn}

Definitions \ref{defn:barcode.finite} and \ref{defn:Qcodim} ensure that the value of $\Qcodim(V)$ is always a finite non-negative integer. We have adopted these definitions because, in the sequel of the present paper, we show how to effectively determine $\Qcodim$ via codimensions for \emph{bona fide} {quiver representations}, which are always barcode-finite. However, we comment that we have left open the following possible generalizations, in two directions, for future work. 
	\begin{enumerate}[label=(\arabic*),leftmargin=*]
	\item The barcode has infinitely-many bars but only finitely many interacting pairs, and hence $\Qcodim(V)$ is still finite;
	\item the barcode happens to admit infinitely-many interacting pairs and we simply allow $\Qcodim(V) = \infty$.
	\end{enumerate}

\section{Representation theoretic stability of Qcodim}
    \label{s:RTstabQcodim}

We view the statement of Theorem \ref{thm:main2} (see below) as the precise formulation of \emph{RT-stability} for quiver codimension of persistence modules.

\begin{defn}
	\label{defn:crit.pts}
Let $V$ be a barcode-finite $S$-persistence module. Define the set of \emph{critical points} for $V$ to be subset  $Z(V)\subseteq S$ given by the union of all births and all deaths of all bars in the barcode. By definition, $Z(V)$ is a finite set.
\end{defn}

\begin{defn}
	\label{defn:h.approx}
Let $h$ be a non-negative integer. Let $V$ be a barcode-finite $S$-persistence module with critical points $Z(V) = \{ z_1 < \cdots < z_m\}$. A finite subset $S' \subseteq S$ admits an \emph{$h$-approximation $V' = V|S'$ to $V$} if
	\begin{enumerate}[leftmargin=*,label=(\roman*)]
	\item there exists $s,t \in S'$ with $s \leq z_1$ and $z_m \leq t$,
	\item whenever $\langle z_k,z_i \rangle$ and $\langle z_{i+1},z_\ell \rangle$ is a non-interacting pair of bars in the barcode for $V$, there exists $s\in S'$ with $z_i < s < z_{i+1}$, and
 	\item $\# \left( S ' \cap \langle z_i,z_{i+1} \rangle\right) \geq h$ for all $i=1,\ldots,m-1$,
	\end{enumerate}
where the intervals in (ii) and (iii) are taken as subsets of $S$.
\end{defn}

    Item (iii) in Definition \ref{defn:h.approx} is the key condition in the sequel; it ensures that $S'$ is ``dense enough" to detect  important features of the barcode. On the other hand, conditions (i) and (ii) address special cases in our analysis. In particular, (i) ensures that our proof of Theorem \ref{thm:main2} will go through when $z_1$ is a \emph{death} time or $z_m$ is a \emph{birth} time. This can occur (say if $S = \RR$) when $V$ has bars of the form $\langle -\infty,z_1 \rangle$ or $\langle z_m,\infty \rangle$. Condition (ii) ensures that non-interacting pairs of bars in $V$ do not restrict to interacting pairs in $V'$; we will need this in the proofs of Lemmas \ref{lem:non.interact.pb.to.no.interact} and \ref{lem:non.interact.to.non.interact}.  

\begin{thm}
	\label{thm:main2}
Consider the barcode finite $S$-persistence module $V$. Suppose that the finite subset $S' \subseteq S$ admits an $h$-approximation $V'=V|S'$. Provided that $h\geq 2$ \[\codim(\Omega') = \Qcodim(V)\]
where $\Omega'$ is the isoclass of the quiver representation associated to $V'$. 
\end{thm}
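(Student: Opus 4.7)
The plan is to apply Theorem \ref{thm:main1} to the quiver representation $V'$, which reduces the desired equality to showing
\[
\#\{\text{interacting pairs in the barcode of } V'\} = \#\{\text{interacting pairs in the barcode of } V\}.
\]
The Crawley--Boevey decomposition restricts block-by-block: each interval module $\KK_I$ appearing in $V$ restricts to $\KK_{I \cap S'}$, which is again an interval module over $S'$ since $I \cap S'$ is convex in $S'$. This yields a natural multiplicity-preserving map $I \mapsto I' := I \cap S'$ from bars of $V$ to bars of $V'$ (with the possibility that some bars vanish). I will exhibit this map as a bijection between interacting pairs.

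Two preliminary reductions from $h \geq 2$ clear the decks. First, condition (iii) of Definition \ref{defn:h.approx} forces the open interval $(z_i, z_{i+1}) \subseteq S$ to contain at least two elements of $S'$ (hence of $S$) for each $i$, so no two consecutive critical points of $V$ are adjacent in $S$; in particular, the barcode of $V$ admits no non-separated pairs, and every interacting pair in $V$ is already interlaced. Second, a pointbar $[z_a, z_a]$ cannot lie in an interlaced pair because the inequalities $b < \beta \leq d < \delta$ of Definition \ref{defn:interacting.pair}(i) forbid pointbars on either side; and every non-pointbar $I$ contains a full gap, so $|I \cap S'| \geq h \geq 2$. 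Thus both members of any interacting pair in $V$ survive in $V'$ as genuine bars.

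For the forward direction, I would start from an interlaced pair $(I, J)$ with $b(I) < b(J) \leq d(I) < d(J)$. The strict inequalities $b(I') < b(J')$ and $d(I') < d(J')$ follow because the respective critical-point gaps contain at least two $S'$-elements. The remaining condition splits by position. If $b(J) < d(I)$, then the gap immediately to the right of $b(J)$ lies in $I \cap J$ and contributes $S'$-points to $I' \cap J'$, yielding interlacing in $V'$. In the ``kissing'' case $b(J) = d(I) = z_p$: if $z_p \in S'$ the pair remains interlaced in $V'$ with $d(I') = b(J') = z_p$; if $z_p \notin S'$, direct analysis shows $d(I')$ is the max of $S' \cap (z_{p-1}, z_p)$ and $b(J')$ is the min of $S' \cap (z_p, z_{p+1})$, so no $S'$-element lies between them and $(I', J')$ becomes non-separated — still interacting. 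This type-shifting between interlaced and non-separated under restriction is, I expect, the main subtlety; the argument works precisely because the \emph{interacting} status (rather than its type) is what gets counted.

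For the reverse direction, suppose $(I', J')$ is interacting in $V'$ with $b(I) \leq b(J)$ (WLOG). Equalities $b(I) = b(J)$ or $d(I) = d(J)$ would force $b(I') = b(J')$ or $d(I') = d(J')$, each of which is incompatible with both cases of Definition \ref{defn:interacting.pair} in $V'$. The remaining obstruction to interlacing in $V$ is a ``gap'' pair with $d(I) < b(J)$. When the gap spans at least two critical-point slots, condition (iii) supplies $\geq 2$ separating $S'$-elements between $d(I')$ and $b(J')$, ruling out either form of interaction in $V'$; when $d(I) = z_c$ and $b(J) = z_{c+1}$ are adjacent in $Z(V)$, the pair is non-interacting in $V$ (not interlaced, and not non-separated by the reduction above), so condition (ii) of Definition \ref{defn:h.approx} provides an $S'$-element in $(z_c, z_{c+1})$ that separates $(I', J')$ in $V'$. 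These are exactly the implications forward-referenced as Lemmas \ref{lem:non.interact.pb.to.no.interact} and \ref{lem:non.interact.to.non.interact}. Combining the two directions yields the bijection, and invoking Theorem \ref{thm:main1} on $V'$ together with the definition of $\Qcodim$ concludes $\codim(\Omega') = \Qcodim(V)$.
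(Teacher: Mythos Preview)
Your overall strategy---setting up a bijection between interacting pairs of $V$ and interacting pairs of $V'$---matches the paper's. However, your first ``preliminary reduction'' is where the argument breaks. You assert that condition~(iii) of Definition~\ref{defn:h.approx} forces the \emph{open} interval $(z_i,z_{i+1})$ to contain at least two elements of $S'$, and conclude that $V$ has no non-separated pairs. But the paper's angle-bracket convention permits $\langle z_i,z_{i+1}\rangle$ to be closed, and the paper's own proof of Lemma~\ref{lem:pointbar.nonsep.to.pointbar.nonsep} depends on exactly that reading: when $[a]$ and $J=\langle b,d\rangle$ are non-separated in $V$ (so nothing in $S$ lies strictly between $a$ and $b$), condition~(iii) with $h\geq 2$ is used to force $\{a,b\}\subseteq S'$. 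Under the closed reading, $V$ certainly \emph{can} have non-separated pairs---for instance $S=\ZZ$ with bars $[0,1]$ and $[2,3]$ and $S'=\{0,1,2,3\}$ is a valid $2$-approximation---and your proof never treats them. The paper handles these cases via Lemmas~\ref{lem:pointbar.nonsep.to.pointbar.nonsep} and~\ref{lem:nonsep.to.nonsep}. Since your entire pointbar discussion rests on the same reduction, both the pointbar and non-pointbar non-separated cases are missing from your argument.

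There is also a smaller omission in your reverse direction. After disposing of $b(I)=b(J)$ and $d(I)=d(J)$, you claim the only remaining non-interacting configuration is a ``gap'' with $d(I)<b(J)$, but this overlooks the nested case $b(I)<b(J)\leq d(J)<d(I)$, which the paper handles as case~(iii) of Lemma~\ref{lem:non.interact.to.non.interact}. It is easy to fill (nesting is clearly preserved under restriction), but as written your case analysis is incomplete.
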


In other words, the barcodes of $V$ and $V'$ have the same number of interacting pairs. The proof of this theorem is the subject of the next section.

The content of Theorem \ref{thm:main2} is twofold. First, even if the critical points $Z(V)$ can not be computed exactly, it is enough to take a $2$-approximation of $V$ and compute the codimension from the resulting quiver representation. Second, the key to the proof is that while the \emph{flavors} of interacting pairs in $V$ and $V'$ may differ---i.e., the number of non-separated vs.~interlaced pairs \emph{need not} be constant---the \emph{total number} of interacting pairs \emph{is} constant.

\begin{rem}
When $S = \RR$ (or some connected subset of $\RR$), an $S'$ which admits an $h$-approximation can always be constructed by knowing
\begin{itemize}[leftmargin=*]
    \item bounds on the set of critical points; say $m\leq |z| \leq M$ for all $z\in Z(V)$, and
    \item a lower bound on the distance between critical points; $\min_{i<j}\{|z_i - z_j|\} \geq \epsilon$.
\end{itemize}
Then, the finite set $S'$ formed by the equally spaced sequence \[m, m+\delta, m+2\delta, \ldots , m + N\delta\] until $m+N\delta \geq M$ admits an $h$-approximation provided $\delta\leq\epsilon/h$.
\end{rem}

\section{Proof of Theorem \ref{thm:main2}}
    \label{s:RTstabQcodim.proof}

Throughout the section, we take as blanket hypotheses those of Theorem \ref{thm:main2} on $V$, $V'$, $S$, $S'$, and $h$; i.e., $h\geq 2$ and $V'=V|S'$ is an $h$-approximation of the barcode-finite $S$-persistence module $V$.

\subsection{Interactions and restrictions of pointbars}
    \label{ss:pointbars}
Recall that a pointbar in $S$ (respectively $S'$) is a bar $I$ such that $b(I)=a=d(I)$ for some $a\in S$ (resp.~$a\in S')$. We will write such a pointbar as $[a]$.
\begin{lem}
	\label{lem:pointbar.interact.nonsep}
If a pointbar is part of an interacting pair, the pair must be non-separated. 
\end{lem}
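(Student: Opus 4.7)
The plan is to unfold Definition~\ref{defn:interacting.pair} and immediately observe that the interlaced condition $b<\beta\leq d<\delta$ is incompatible with either bar being a pointbar. Since interlacing and non-separation are the only two ways a pair can be interacting, ruling out interlacing forces the pair to be non-separated.

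In detail, suppose $I=\langle b,d\rangle$ and $J=\langle\beta,\delta\rangle$ form an interacting pair with $b\leq\beta$ (the WLOG convention of Definition~\ref{defn:interacting.pair}), and suppose at least one of them is a pointbar $[a]$. I would handle the two cases separately. If $I=[a]$, then $b=d=a$, and the interlaced inequality $b<\beta\leq d$ would demand $a<\beta\leq a$, which is impossible; so case (i) is ruled out and the pair must fall under case (ii). If instead $J=[a]$, then $\beta=\delta=a$, and the interlaced inequality $\beta\leq d<\delta$ becomes $a\leq d<a$, again impossible; once more we are forced into case (ii). In either configuration, the pair is non-separated, which is the conclusion.

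There is essentially no main obstacle here: the proof is a two-line case analysis once the definitions are unfolded. The only point worth flagging is bookkeeping around the WLOG hypothesis $b\leq\beta$, namely that one must check both ``the pointbar is the earlier-born bar'' and ``the pointbar is the later-born bar,'' but both reduce to the same trivial incompatibility between $b=d$ (or $\beta=\delta$) and the strict chain $b<\beta\leq d<\delta$. No further machinery is needed, and the two cases together cover every way a pointbar can appear in an interacting pair.
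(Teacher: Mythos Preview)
Your proof is correct and follows essentially the same approach as the paper: both arguments rule out the interlaced case by noting that substituting $b=d=a$ (or $\beta=\delta=a$) into the strict chain $b<\beta\leq d<\delta$ yields an impossible inequality of the form $a<\beta\leq a$ or $a\leq d<a$. The paper's version is just slightly more compressed, but the content is identical.
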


\begin{proof}
Denote the pointbar by $[a]$ and the other bar (possibly a pointbar, or not) by $\langle b,d \rangle$. If the pair is interlaced, we must either have $a < b \leq a$ or $a \leq d < a$ which are both impossible.
\end{proof}

\begin{lem}
	\label{lem:pointbar.nonsep.to.pointbar.nonsep}
Suppose that $I = [a] $ is a pointbar for $V$ which interacts with a bar $J$ in the barcode for $V$. Then we must have that $a \in S'$, $I' = [a] $ is a pointbar for $V'$, and $I'$ interacts with a corresponding bar $J'$ in the barcode for $V'$. Moreover, the bars $I$ and $I'$ have the same multiplicity in their respective barcodes, and the bars $J$ and $J'$ have the same multiplicity in their respective barcodes.
\end{lem}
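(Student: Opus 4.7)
The plan is to begin with Lemma \ref{lem:pointbar.interact.nonsep}, which guarantees that the interacting pair $(I, J)$ must be non-separated. Write $J = \langle b, d\rangle$, and by symmetry assume $d < a$ with no element of $S$ strictly between $d$ and $a$. In particular, $d$ and $a$ are both critical points of $V$ and are consecutive in $Z(V)$; say $d = z_i$ and $a = z_{i+1}$.

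The first key step is to extract from the $h$-approximation hypothesis that both $a$ and $d$ lie in $S'$. Since $I$ and $J$ are non-separated in $S$, the open gap $(z_i, z_{i+1})$ is empty in $S$. Therefore any element of $S' \cap \langle z_i, z_{i+1}\rangle$ must equal either $z_i$ or $z_{i+1}$. Condition (iii) of Definition \ref{defn:h.approx} demands at least $h \ge 2$ elements in this intersection, so we are forced to conclude $z_i = d \in S'$ and $z_{i+1} = a \in S'$.

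With $a \in S'$, the restriction $I' := I \cap S' = \{a\}$ is the pointbar $[a]$ in $V'$, and $J' := J \cap S'$ is nonempty with $\sup J' = d$. I would then verify that $(I', J')$ is non-separated (hence interacting) in $V'$: indeed $d(J') = d < a = b(I')$ and $(d, a) \cap S' \subseteq (d, a) \cap S = \emptyset$, so no element of $S'$ lies strictly between them.

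The final task is multiplicity preservation: showing that no bar of $V$ other than copies of $I$ restricts to $[a]$, and no bar of $V$ other than copies of $J$ restricts to $J'$. For $[a]$: any bar $K$ of $V$ with $K \cap S' = \{a\}$ must contain $a$, exclude its $S'$-predecessor $d$, and exclude its $S'$-successor. Applying condition (iii) to the flanking gap $\langle z_{i+1}, z_{i+2}\rangle$ produces an $S'$-element no later than the next critical point $z_{i+2}$, and since the endpoints of $K$ must themselves be critical points of $V$, the only possibility is $K = [a]$. For $J$, an analogous argument on the right pins the death of any such $K$ to $d$, while a symmetric application of condition (iii) near $b$ (combined with condition (ii) when the configuration at $b$ is non-interacting) pins the birth to $b$. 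I expect this last uniqueness argument for the left endpoint of $J$ to be the main obstacle, since it requires careful case analysis on whether $b$ itself lands in $S'$ and on how nearby critical points of $V$ interact with the density guaranteed by the $h$-approximation.
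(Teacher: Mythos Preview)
Your argument tracks the paper's almost exactly. Both begin with Lemma~\ref{lem:pointbar.interact.nonsep} to force the pair to be non-separated, observe that the open gap between the two adjacent critical points contains no element of $S$, and then use condition~(iii) of Definition~\ref{defn:h.approx} with $h\ge 2$ to conclude that both of those critical points lie in $S'$ and remain adjacent there, so the restricted pair is again non-separated. The paper argues the case $a<b$ and declares the case $d<a$ ``similar''; you chose the latter, which is harmless.

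The only substantive divergence is in the treatment of multiplicities. The paper dispatches this in a single sentence---``the statement on multiplicities follows from the fact that $V'=V|S'$''---with no further justification, whereas you set up an endpoint-by-endpoint uniqueness argument and flag the left endpoint of $J$ as ``the main obstacle.'' Your caution is reasonable, but the obstacle dissolves once you observe the global fact that distinct critical points of $V$ are always separated by some element of $S'$ (again from condition~(iii) with $h\ge 2$), so distinct bars of $V$ restrict to distinct bars of $V'$ whenever they restrict at all. That injectivity is effectively what the paper's one-liner is invoking (and what is made explicit for non-pointbars in Lemma~\ref{lem:I.admits.I'}); it renders your proposed case analysis on the birth of $J$ unnecessary.
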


\begin{proof}
Lemma \ref{lem:pointbar.interact.nonsep} ensures that $I$ and $J$ must interact as a non-separated pair. Write $J = \langle b,d \rangle $ where it is possible that $b=d$ and so $J$ is also a pointbar, and first consider the case that $a<b$. Since $I$ and $J$ are non-separated, it must be the case that there are no elements of $S$ between $a$ and $b$. Since $h\geq 2$, we must have that $\langle a,b \rangle \cap S' = \{a,b\}$. Taken together, the previous two statements imply that $a,b\in S'$ and there are no elements of $S'$ between $a$ and $b$. Hence, $I' = [a] $ is a bar for $V'$ and there must exist a bar $J' = \langle b,d' \rangle $ with $b \leq d' \leq d$ with which it interacts. The case $d<a$ is similar. The statement on multiplicities follows from the fact that $V' = V|S'$.
\end{proof}

\begin{lem}
	\label{lem:non.interact.pb.to.no.interact}
Suppose that $I = [a] $ is a pointbar for $V$ which does not interact with any other bars in the barcode for $V$. One of the following must hold:
	\begin{itemize}
	\item $a\notin S'$ and thus $I$ does not restrict to a pointbar for the restriction $V'$;
	\item $a \in S'$, and thus $I = [a] $ is a pointbar for $V'$ but still does not interact with any other bars for $V'$.
	\end{itemize}
\end{lem}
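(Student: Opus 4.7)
The plan is to separate the two cases stated in the lemma and handle them independently. The first, $a \notin S'$, is immediate: the restriction of the interval module $\KK_{[a]}$ to $S'$ is the interval module over $[a] \cap S' = \emptyset$, which is the zero module, so $I$ contributes no summand to the decomposition of $V'$, and in particular no pointbar at $a$.

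The substantive case is $a \in S'$. Here $[a] \cap S' = \{a\}$, so $I$ survives restriction as the pointbar $[a]$ in the decomposition of $V'$. To show this pointbar interacts with no other bar in $V'$, I would argue by contradiction: suppose $[a]$ interacts in $V'$ with some bar $J'$. By Lemma~\ref{lem:pointbar.interact.nonsep}, the pair must be non-separated, and by the symmetry of the two non-separated configurations I may assume $d(J') < a$ with no element of $S'$ strictly between $d(J')$ and $a$.

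Next I would lift $J'$ back to $V$: it is the restriction $J \cap S'$ of some bar $J = \langle b, d \rangle$ in the decomposition of $V$. From $d(J') < a$ and $a \in S'$ I deduce $a \notin J$ (else $a$ would lie in $J'$), hence $d < a$ in $S$. The hypothesis that $I$ does not interact with any bar in $V$ then forces $(J, I)$ to be a non-interacting pair; since interlacement with a pointbar is impossible, the pair cannot be non-separated either, so some element of $S$ lies strictly between $d$ and $a$. In particular, $d$ and $a$ are distinct critical points of $V$.

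Finally, I would invoke condition (iii) of Definition~\ref{defn:h.approx}: choose consecutive critical points $z_i < z_{i+1}$ in $Z(V)$ with $d \leq z_i < z_{i+1} \leq a$ (e.g.\ take $z_i = d$ and $z_{i+1}$ its successor in $Z(V)$). Since $h \geq 2$, the open interval $\langle z_i, z_{i+1} \rangle$ meets $S'$ in at least two points, so some $s \in S'$ satisfies $d(J') \leq d \leq z_i < s < z_{i+1} \leq a$. This contradicts the standing assumption that no element of $S'$ lies strictly between $d(J')$ and $a$. The main obstacle I anticipate is the bookkeeping in the restriction step: verifying that $d(J')$ is dominated by $d(J)$ and choosing the correct consecutive pair of critical points so that condition (iii) delivers a point of $S'$ precisely in the forbidden range rather than somewhere merely nearby.
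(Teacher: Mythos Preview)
Your overall architecture (contradiction, lift $J'$ to a bar $J$ of $V$, then separate $[a]$ from $J'$ by an element of $S'$) is sound, but the step where you invoke condition (iii) of Definition~\ref{defn:h.approx} does not do what you need. Recall the paper's convention that $\langle z_i,z_{i+1}\rangle$ may denote the \emph{closed} interval; under that reading, $h\geq 2$ only guarantees $\#\big(S'\cap[z_i,z_{i+1}]\big)\geq 2$, and those two points may well be $z_i$ and $z_{i+1}$ themselves. In the critical case where $z_i=d$ and $z_{i+1}=a$ (i.e.\ $d$ and $a$ are already consecutive in $Z(V)$), you then obtain no $s\in S'$ with $z_i<s<z_{i+1}$, and your contradiction evaporates: $d(J')=d$ and $a$ could be adjacent in $S'$, making $[a]$ and $J'$ non-separated. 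Note that if condition~(iii) were read with open brackets, condition~(ii) would be redundant---so the closed reading is the operative one, and condition~(iii) alone genuinely cannot finish the argument.

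What is actually needed is condition~(ii) of Definition~\ref{defn:h.approx}, which is tailor-made for this situation: given a non-interacting pair in $V$ whose death and birth sit at consecutive critical points $z_i,z_{i+1}$, it supplies an $s\in S'$ with $z_i<s<z_{i+1}$. The paper's proof invokes exactly this condition (and the remark following Definition~\ref{defn:h.approx} flags that condition~(ii) is placed there precisely for this lemma and Lemma~\ref{lem:non.interact.to.non.interact}). So your fix is simple: when $d$ and $a$ are consecutive in $Z(V)$, appeal to condition~(ii) rather than~(iii); when they are not consecutive, your condition~(iii) argument does produce a point of $S'$ strictly between $d$ and $a$ (since the second of the two guaranteed points in $[d,z_{i+1}]$ lies strictly below $a$), and the proof goes through. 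The paper sidesteps this case split by arguing directly (rather than by contradiction) and citing only condition~(ii), implicitly absorbing the non-consecutive case.
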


\begin{proof}
We need only prove the claim in the second mutually exclusive bullet point. Suppose that $J = \langle b,d \rangle $ is a bar in $V$. We consider the case that $a<b$; the others are analogous. Condition (ii) of Definition \ref{defn:h.approx} ensures there is an $s'\in S'$ with $a < s' < b$. If $J$ is a pointbar which does not restrict to $V'$, there is nothing to prove. However, if $b\in S'$ and $J$ is a pointbar, or $J$ is not a pointbar but restricts to $J' = \langle b',d' \rangle $, in either case we have $a < s' < b \leq b'$ and hence $I$ interacts with neither the pointbar $J$ nor $J'$ in the barcode for $V'$.
\end{proof}

\subsection{Interactions and restrictions for bars which are not pointbars}
    \label{ss:nonpointbars}

Let $\BB$ denote the set of bars for $V$ which are not pointbars. Analogously, let $\BB'$ denote the set of bars for $V'$ which are not pointbars. We define a map $\Phi:\BB \to \BB'$ given as follows. For $I=\langle b,d\rangle \in \BB$ write $I'=\Phi(I) = \langle b',d'\rangle \in \BB'$ for some $b',d'\in S'$ and satisfying the crieteria that
    \begin{enumerate}[label=(C\arabic*)]
        \item $b\leq b'$ and no other elements of $S'$ are between $b$ and $b'$;
	\item $d'\leq d$ and no other elements of $S'$ are between $d'$ and $d$.
    \end{enumerate}

\begin{lem}
	\label{lem:I.admits.I'}
The map $\Phi$ is well-defined and is a one-to-one correspondence. Further, the multiplicity for any $I\in \BB$ in the barcode for $V$ equals the multiplicity of $I' = \Phi(I)$ in the barcode for $V'$. 
\end{lem}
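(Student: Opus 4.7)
The plan centers on the restriction compatibility of Crawley-Boevey's decomposition:
\[
V' = V|S' \;\cong\; \bigoplus_{J} \bigl(\KK_J|S'\bigr)^{m_J},
\]
where the sum runs over all bars $J$ of $V$ and $\KK_J|S'$ is the interval module over $S'$ supported on $J \cap S'$ (which is itself an interval of $S'$). Using this, I would verify well-definedness, injectivity, surjectivity, and multiplicity-preservation in turn.

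For well-definedness, fix $I = \langle b,d \rangle \in \BB$ and write $b = z_k$, $d = z_\ell$ with $k < \ell$. Condition~(i) of Definition~\ref{defn:h.approx} together with the finiteness of $S'$ ensures that $b' = \min\{ s' \in S' : s' \geq b\}$ and $d' = \max\{s' \in S' : s' \leq d\}$ both exist and are the unique $S'$-elements satisfying (C1) and (C2). The crucial strict inequality $b' < d'$ would be established via condition~(iii) with $h \geq 2$: going case-by-case on whether $z_k$ lies in $S'$, and using that $(z_k, z_{k+1}) \cap S'$ contains at least $h \geq 2$ elements, one always produces $s' \in S'$ with $b' < s' < z_{k+1} \leq d$, whence $b' < s' \leq d'$. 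Thus $I' = \langle b',d' \rangle \in \BB'$, and from the restriction decomposition it follows that $I'$ indeed appears as a bar of $V'$.

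For injectivity, suppose $\Phi(I_1) = \Phi(I_2)$ with $I_j = \langle b_j,d_j \rangle$ and, without loss of generality, $b_1 \leq b_2$. If $b_1 < b_2$, writing $b_1 = z_i$ and $b_2 = z_j$ with $i < j$, condition~(iii) produces $s' \in S' \cap (z_i, z_{i+1}) \subseteq (b_1, b_2)$, forcing $b'_1 \leq s' < b_2 \leq b'_2$---a contradiction. Hence $b_1 = b_2$, and symmetrically $d_1 = d_2$. For surjectivity, given $I' \in \BB'$, the restriction decomposition shows that some summand $\KK_J|S'$ must yield $I'$, so $J \cap S' = I' \cap S'$; since $|I' \cap S'| \geq 2$, the bar $J$ cannot be a pointbar, placing $J \in \BB$ with $\Phi(J) = I'$ by construction. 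For the multiplicity claim, the multiplicity of $I'$ in the barcode of $V'$ equals $\sum_{J : J \cap S' = I' \cap S'} m_J$; pointbars cannot contribute (their $S'$-intersections have cardinality at most $1$), and injectivity forces exactly one non-pointbar to contribute, namely $J = \Phi^{-1}(I')$, giving multiplicity $m_I$.

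The principal obstacle I anticipate is establishing the strict inequality $b' < d'$, because this is precisely what prevents $\Phi$ from sending some $I \in \BB$ to a pointbar of $V'$; this is the single place where $h \geq 2$ is indispensable. Once $b' < d'$ is in hand, injectivity, surjectivity, and the multiplicity claim all follow from the density provided by condition~(iii) together with the formal compatibility of Crawley-Boevey's decomposition with restriction.
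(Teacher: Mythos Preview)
Your proposal is correct and follows the same route as the paper: define $b'=\min\{s'\in S':s'\ge b\}$, $d'=\max\{s'\in S':s'\le d\}$, use $h\ge 2$ to secure $b'<d'$, and appeal to the restriction decomposition $V|S'\cong\bigoplus_J(\KK_J|S')^{m_J}$ for the barcode and multiplicity statements. In fact your write-up is considerably more thorough than the paper's, which asserts $b'<d'$ without argument and does not spell out injectivity or surjectivity at all; your explicit treatment of these (and the observation that a pointbar of $V$ cannot restrict to a non-pointbar of $V'$, so only $\Phi^{-1}(I')$ contributes to the multiplicity sum) fills real gaps in the paper's exposition.
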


\begin{proof}
Set $b' = \inf\{ s \in S' ~|~ s\geq b\}$ and $d' = \sup\{ s \in S' ~|~ s \leq d\}$. Since $S'$ is at least a $2$-approximation of $S$; i.e., $h\geq 2$, we are assured that $b'$ and $d'$ both exist, and moreover, $b'<d'$. Since $V'$ is a restriction of $V$, $\langle b',d' \rangle$ will be in the barcode for $V'$ and satisfy (C1) and (C2) by construction. The statement on equality of multiplicities is guaranteed by the fact that $V' = V|S'$.
\end{proof}

\begin{lem}
	\label{lem:IJ.interlaced.I'J'.interlaced}
Suppose that $I$ and $J$ are an interlaced pair of bars for $V$. By Lemma \ref{lem:pointbar.interact.nonsep}, both $I,J\in\BB$. The corresponding bars $I'=\Phi(I)$ and $J'=\Phi(J)$ interact in the barcode for $V'$. 
\end{lem}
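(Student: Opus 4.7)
The plan is to unwind the definitions of $\Phi$ from Lemma~\ref{lem:I.admits.I'} and apply condition~(iii) of Definition~\ref{defn:h.approx}. Writing $I = \langle b, d \rangle$ and $J = \langle \beta, \delta \rangle$ under the blanket assumption $b \leq \beta$, the interlacing hypothesis amounts to the chain $b < \beta \leq d < \delta$ of critical points of $V$. Setting $I' = \langle b', d' \rangle$ and $J' = \langle \beta', \delta' \rangle$, my task is to verify that the pair $(I', J')$ satisfies one of the two clauses of Definition~\ref{defn:interacting.pair}.

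First I would verify the ``outer'' strict inequalities $b' < \beta'$ and $d' < \delta'$. For the former, condition~(iii) with $h \geq 2$, applied segment-by-segment through any intermediate consecutive critical points lying in $[b, \beta]$, gives $\#(S' \cap [b,\beta]) \geq 2$. Whether or not $\beta \in S'$, this forces $S' \cap [b, \beta) \neq \emptyset$, from which $b' \leq s < \beta \leq \beta'$ for any such $s$. The inequality $d' < \delta'$ follows by the symmetric argument using the max-based definitions of $d'$ and $\delta'$.

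The core step is to compare $\beta'$ with $d'$, and this bifurcates on the relationship between $\beta$ and $d$. If $\beta < d$, condition~(iii) once more furnishes an $s \in S' \cap (\beta, d)$, yielding $\beta' \leq s \leq d'$, so $I'$ and $J'$ are interlaced. If $\beta = d$ and this common value lies in $S'$, then $\beta' = d' = \beta$ and the pair is still interlaced. Finally, if $\beta = d \notin S'$, then the definitions of $\Phi$ give $d' < d = \beta < \beta'$ together with $(d', \beta') \cap S' = \emptyset$, since any element of $S'$ strictly greater than $d'$ must exceed $d$, and any element of $S'$ strictly less than $\beta'$ must be less than $d$; hence $I'$ and $J'$ form a non-separated pair.

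The main conceptual point, rather than any single hard step, is precisely this last subcase: an interlaced pair in $V$ can restrict to a non-separated pair in $V'$ whenever the bars meet at a critical point absent from the sampling $S'$. This is exactly the phenomenon flagged in the remark following Theorem~\ref{thm:main2}, and it explains why the lemma records merely that $I'$ and $J'$ \emph{interact}, not that they remain interlaced. The hypothesis $h \geq 2$ is doing genuine work in the outer-inequality step, preventing $b'$ from coalescing with $\beta'$ or $d'$ with $\delta'$ due to insufficient sampling density around the endpoints.
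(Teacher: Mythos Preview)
Your proof is correct and follows essentially the same approach as the paper's: both arguments split on whether $\beta < d$ or $\beta = d$, use the $h \geq 2$ hypothesis via condition~(iii) to secure the needed elements of $S'$, and in the $\beta = d$ subcase further split on whether this common value lies in $S'$ to obtain either an interlaced or a non-separated pair. The only cosmetic difference is that you isolate the outer inequalities $b' < \beta'$ and $d' < \delta'$ first and treat the middle comparison $\beta'$ versus $d'$ separately, whereas the paper handles all four inequalities together within each case.
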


\begin{proof}
Write $I = \langle b,d \rangle $ and $J = \langle \beta,\delta \rangle $. Without loss of generality, the interlaced criterion says that $b < \beta \leq d < \delta$.

\underline{Case I: $\beta < d$}. Let $I' = \langle s_1,s_3 \rangle $ and $J' = \langle s_2,s_4 \rangle $. Since each of $b < \beta < d < \delta$ are critical points in $Z(V)$ and $h \geq 2$, we can conclude
	\begin{itemize}
	\item $b \leq s_1< \beta$,
	\item $\beta\leq s_2 < s_3 \leq d$, and
	\item $d < s_4 \leq \delta$.
	\end{itemize}
Putting these together, we must have that $s_1 < s_2 < s_3 < s_4$. Hence $I' = \langle s_1,s_3 \rangle $ and $J' = \langle s_2,s_4 \rangle $ are an interlaced pair of bars for $V'$.

\underline{Case II: $\beta = d$}. Write $I' = \langle t_1, t_2 \rangle $ and $J' = \langle t_3,t_4 \rangle$ for $t_1<t_2\leq t_3 < t_4$ some elements of $S'$. Since $h \geq 2$ we can conclude that $t_1, t_2 \in \langle b,d \rangle $ and $t_3,t_4 \in \langle \beta,\delta \rangle $. If $\beta = d$ happens to be in $S'$, then $t_2 = t_3$ and $I'$ and $J'$ are again interlaced. On the other hand, if $\beta = d$ is not an element of $S'$, since $t_2$ is maximal among elements of $S'$ bounded above by $d$ and $t_3$ is minimal among elements of $S'$ bounded below by $\beta$, we conclude that there are no elements of $S'$ between $t_2$ and $t_3$. Hence, $I'$ and $J'$ form a non-separated pair of bars for $V'$.

In either Case I or II, we conclude that the bars $I'$ and $J'$ interact.
\end{proof}

\begin{ex}
As an example of the scenario in Case II from the preceding proof, consider the figure below. We depict the barcode for a persistence module $V$ over $S=\RR$ with 2 bars. The elements of the set $Z(V)$ of critical points of $V$ are depicted on the number line at the bottom as blue dots in $S=\RR$. A $3$-approximation $S'$ is depicted as a subset of the number line with red dots. The barcode of the restriction $V'=V|S'$ is also shown.
   \begin{center}
       
        \begin{tikzpicture}

    \node[inner sep=0pt] (bb1) at (-2.3,1.2) {$\bullet$};
    \node[inner sep=0pt] (dd1) at (1.6,1.2) {$\bullet$};
    \node[inner sep=0pt] (bb2) at (1.6,0.8) {$\bullet$};
    \node[inner sep=0pt] (dd2) at (4.2,0.8) {$\bullet$};

    \node at (-3.2,1) {$V$:};
    \node at (-3.2,0) {$V'$:};

   \node[inner sep=0pt] (b1) at (-2,0) {$\bullet$};
    \node[inner sep=0pt] (d1) at (1,0) {$\bullet$};
    \node[inner sep=0pt] (b2) at (2,0) {$\bullet$};
    \node[inner sep=0pt] (d2) at (4,0) {$\bullet$};

    \foreach \x in {-2,...,4} 
               \node at (\x,0) {$\bullet$};

    \foreach \x in {-2,...,4}
                \node at (\x,-1) {{\color{red}$\bullet$}};

    \node at (-2.5,-1) {{\color{red}$\bullet$}};
    \node at (4.4,-1) {{\color{red}$\bullet$}};

    \node (z1) at (-2.3,-1) {{\color{blue}$\bullet$}};
    \node (z2) at (1.6,-1) {{\color{blue}$\bullet$}};
    \node (z3) at (4.2,-1) {{\color{blue}$\bullet$}};

    \draw (b1)--(d1);
    \draw (b2)--(d2);
    \draw (bb1)--(dd1);
    \draw (bb2)--(dd2);

    \draw[->] (-3,-1)--(5,-1);
    \node (S) at (-3.2,-1) {$S$};
    
  \end{tikzpicture}
   \end{center}
The two bars from $V$ interact as an interlaced pair, while the corresponding bars for $V'$ (images of the map $\Phi$) interact as a non-separated pair.
\end{ex}

\begin{lem}
	\label{lem:nonsep.to.nonsep}
Suppose that $I,J \in \BB$ are a non-separated pair of bars for $V$. Then the corresponding bars $I'=\Phi(I)\in\BB'$ and $J'=\Phi(J)\in\BB'$ are a non-separated pair of bars for $V'$.
\end{lem}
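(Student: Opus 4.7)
The plan is to verify directly that $I' = \Phi(I)$ and $J' = \Phi(J)$ satisfy condition (ii) of Definition \ref{defn:interacting.pair}, taking advantage of the fact that $\Phi$ moves endpoints by the minimum possible amount. Without loss of generality, write $I = \langle b, d\rangle$ and $J = \langle \beta, \delta\rangle$ with $b \leq \beta$, so the non-separated hypothesis gives $d < \beta$ together with the absence of any element of $S$ strictly between $d$ and $\beta$. By Lemma~\ref{lem:I.admits.I'} we may set $I' = \langle b',d'\rangle$ and $J' = \langle \beta',\delta'\rangle$, where (C1)--(C2) apply to both intervals; in particular $d' \leq d$ with no element of $S'$ strictly between $d'$ and $d$, and $\beta \leq \beta'$ with no element of $S'$ strictly between $\beta$ and $\beta'$.

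First I would establish the ordering needed for a non-separated pair in $V'$. The chain $d' \leq d < \beta \leq \beta'$ immediately yields $d' < \beta'$, and since $b \leq \beta \leq \beta'$ with $b'$ the least element of $S'$ that is $\geq b$, we also get $b' \leq \beta'$, so the labeling $b' \leq \beta'$ agrees with the convention of Definition~\ref{defn:interacting.pair}.

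The content of the lemma is the gap condition: no element of $S'$ lies strictly between $d'$ and $\beta'$. I would prove this by contradiction. Suppose $s' \in S'$ satisfies $d' < s' < \beta'$. Since $S' \subseteq S$, we may compare $s'$ to the critical values $d$ and $\beta$ and split into three cases. If $s' \leq d$, then $d' < s' \leq d$; the case $s' = d$ forces $d \in S'$ and hence $d' = d$, contradicting $d' < s'$, while the case $s' < d$ directly contradicts (C2) for $I'$. If $d < s' < \beta$, then $s'$ is an element of $S$ strictly between $d$ and $\beta$, contradicting the non-separation of $I$ and $J$ in $V$. If $\beta \leq s'$, then $\beta \leq s' < \beta'$ contradicts (C1) for $J'$. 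In every case we reach a contradiction, so no such $s'$ exists.

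Combining these two steps shows that $I'$ and $J'$ satisfy Definition~\ref{defn:interacting.pair}(ii), so they form a non-separated pair in the barcode for $V'$. The argument requires no appeal to condition (iii) of Definition~\ref{defn:h.approx} beyond what is already used in Lemma~\ref{lem:I.admits.I'} to construct $\Phi$, and it does not invoke condition (ii) of Definition~\ref{defn:h.approx} at all (that hypothesis was needed only for non-interacting pairs). The main bookkeeping obstacle is the boundary case $s' = d$ (or symmetrically $s' = \beta$), which is handled as above by observing that $d \in S'$ would force the optimal choice $d' = d$.
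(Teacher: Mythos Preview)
Your proof is correct and follows essentially the same approach as the paper: both arguments combine (C2) for $I'$, the non-separation hypothesis on $I,J$, and (C1) for $J'$ to rule out any element of $S'$ strictly between $d'$ and $\beta'$. The paper phrases this directly by chaining the three ``no elements between'' conditions, whereas you unpack the same reasoning into an explicit case analysis by contradiction, but the content is identical.
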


\begin{proof}
Write $I = \langle b,d \rangle $ and $J=\langle \beta,\delta \rangle $ and without loss of generality assume that $b<d<\beta<\delta$. The non-separated criterion implies that there are no elements of $S$ between $d$ and $\beta$. Since $S' \subseteq S$, we moreover conclude that there are no elements of $S'$ between $d$ and $\beta$. Then, writing $I' = \langle b',d' \rangle $ and $J' = \langle \beta',\delta' \rangle $, the criteria (C1) and (C2) imply that there are no elements of $S'$ between $d'$ and $d$, nor between $\beta$ and $\beta'$. We deduce there are no elements of $S'$ between $d'$ and $\beta'$; i.e., $I'$ and $J'$ are also non-separated in the barcode for $V'$.
\end{proof}

\begin{lem}
	\label{lem:non.interact.to.non.interact}
Suppose that $I,J\in\BB$ are not an interacting pair of bars for $V$. Then the corresponding bars $I'=\Phi(I)$ and $J'=\Phi(J)$ are not an interacting pair for $V'$.
\end{lem}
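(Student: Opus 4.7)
The plan is to split into three cases based on which clause of Definition~\ref{defn:interacting.pair} fails for the non-interacting pair $(I,J)$, then verify that the corresponding clause also fails for $(I',J') = (\Phi(I),\Phi(J))$. Writing $I = \langle b,d\rangle$ and $J = \langle\beta,\delta\rangle$ with $b \leq \beta$ (WLOG), and $I' = \langle b',d'\rangle$, $J' = \langle \beta',\delta'\rangle$, the defining criteria (C1), (C2) of $\Phi$ immediately yield the monotonicity relations $b \leq b'$, $\beta \leq \beta'$, $d' \leq d$, and $\delta' \leq \delta$; moreover $b' \leq \beta'$ since $b \leq \beta$, and Lemma~\ref{lem:I.admits.I'} guarantees $I', J' \in \BB'$, so in particular $b' < d'$ and $\beta' < \delta'$.

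\textbf{Case A: $b = \beta$.} Both $b'$ and $\beta'$ equal $\min\{s \in S' : s \geq b\}$, so $b' = \beta'$. Interlacing requires $b' < \beta'$ (fails), and non-separation requires $d' < \beta' = b'$, which contradicts $b' < d'$.

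\textbf{Case B: $b < \beta \leq d$ and $\delta \leq d$} (so $J$ is nested inside $I$; the non-pointbar hypothesis $\beta < \delta \leq d$ forces $\beta < d$ strictly). The inequality $\delta \leq d$ passes to $\delta' \leq d'$, which defeats interlacing. To defeat non-separation I would invoke condition (iii) of Definition~\ref{defn:h.approx} to produce at least one element of $S'$ strictly between $\beta$ and $d$, then conclude $\beta' \leq d'$.

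\textbf{Case C: $b < \beta$, $d < \beta$, and some $s \in S$ lies strictly between $d$ and $\beta$.} Interlacing fails from $d' \leq d < \beta \leq \beta'$. The crux is to locate $s' \in S'$ with $d < s' < \beta$, since this yields $d' < s' < \beta'$ and defeats non-separation. If $d$ and $\beta$ are consecutive in $Z(V)$, condition (ii) of Definition~\ref{defn:h.approx}---designed precisely for non-interacting pairs like $(I,J)$---supplies such an $s'$ directly; otherwise, some $z_k \in Z(V)$ lies strictly between $d$ and $\beta$, and condition (iii) places $h \geq 2$ elements of $S'$ in any gap of $Z(V)$ adjacent to $z_k$ that is contained in $(d,\beta)$. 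I expect Case C to be the main obstacle: a gap-separated pair in $V$ could \emph{a priori} become non-separated in $V'$ if $S'$ leapt over the separating gap of $S$, and conditions (ii), (iii) of the $h$-approximation are engineered precisely to prevent this, with (ii) handling the delicate sub-case of adjacent critical points and (iii) handling the generic sub-case.
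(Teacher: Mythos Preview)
Your proposal is correct and follows the same case-by-case strategy as the paper, which splits into the four cases $b=\beta$, $d=\delta$, $b<\beta<\delta<d$, and $b<d<s<\beta<\delta$; your Case~A is the first, your Case~B merges the middle two, and your Case~C is the last. Two remarks: in Case~B you do not need condition~(iii) of Definition~\ref{defn:h.approx} at all---you already established $\beta'<\delta'$ in your preamble and $\delta'\leq d'$ at the start of Case~B, so $\beta'<d'$ is immediate and non-separation is defeated (this is precisely how the paper handles it); conversely, your Case~C sub-split into ``$d,\beta$ consecutive in $Z(V)$'' versus ``not consecutive'' is actually more careful than the paper's own argument, which invokes only condition~(ii) of Definition~\ref{defn:h.approx} without separately treating the non-consecutive situation via condition~(iii).
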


\begin{proof}
Write $I = \langle b,d \rangle $, $J = \langle \beta,\delta \rangle $, $I' = \langle b',d' \rangle $, and $J' = \langle \beta',\delta' \rangle $. If $I$ and $J$ do not interact then, without loss of generality, one of the following cases must hold:
	\begin{enumerate}[label = (\roman*)]
	\item $b = \beta$;
	\item $d = \delta$;
	\item $b < \beta < \delta < d$;
	\item $b < d < s < \beta < \delta$ for some $s\in S$.
	\end{enumerate}
In case (i), the criteria (C1) and (C2) guarantee that $b' = \beta'$. Similarly, in case (ii), $d' = \delta'$. In either case, $I'$ and $J'$ do not form an interacting pair. 

In case (iii), the criteria (C1) and (C2) ensure that $b'\leq \beta'$ and $\delta' \leq d'$. The condition that $h\geq 2$ ensures that $\beta' < \delta'$ and we have $b'\leq \beta'<\delta'\leq d'$ which is sufficient %(and not all necessary) 
to conclude that $I'$ and $J'$ do not form an interacting pair.

In case (iv), the criteria (C1) and (C2) imply that $b' < d' $ and $\beta' < \delta'$. Condition (ii) of Definition \ref{defn:h.approx} ensures that we must have $s' \in S'$ so that $b'<d'<s'<\beta'<\delta'$. Hence, we have an analogous string of inequalities in $S'$ to that in $S$, and we conclude that $I'$ and $J'$ do not form an interacting pair.
\end{proof}

\subsection{Finishing the proof of Theorem \ref{thm:main2}}
On the one hand, Lemmas \ref{lem:pointbar.nonsep.to.pointbar.nonsep}, \ref{lem:IJ.interlaced.I'J'.interlaced}, and \ref{lem:nonsep.to.nonsep} together ensure that every interacting pair of bars for $V$ restricts to an interacting pair of bars for $V'$. On the other hand, Lemmas \ref{lem:non.interact.pb.to.no.interact} and \ref{lem:non.interact.to.non.interact} ensure that every non-interacting pair of bars for $V$ either restricts to a non-interacting pair of bars in $V'$, or at least one of the bars does not restrict at all to the barcode for $V'$, and hence does not contribute any new interactions. This proves that the number of interacting pairs of bars for $V$ is equal to the number of interacting pairs of bars for $V'$, and we conclude that $\Qcodim(V) = \Qcodim(V') = \codim(\Omega')$ as desired. \qed

\section{Examples and Realization Theorems}
    \label{s:ExAndRealizationThms}

\subsection{Qcodim from VR or \v{C}ech filtrations}
Throughout this subsection, we consider the metric space $\RR^p$ with standard Euclidean metric. When $X = \{x_i\}_{i=1}^N \subset \RR^p$ is a finite point cloud, recall from Examples \ref{ex:cech.filt} and \ref{ex:VR.filt} we may define persistence modules over $S = \RR_{\geq0}$ by taking the persistent homologies
	\[
	\PH_d(\VR(X,\RR^p)) \quad\text{or}\quad \PH_d(\C(X,\RR^p)).
	\]

\begin{thm}	
	\label{thm:VR.cech.0}
For any point cloud $X \subset \RR^p$, in homological degree $d=0$ we have
	\[
	\Qcodim(\PH_0(\VR(X,\RR^p))) = 0 \quad\text{and}\quad \Qcodim(\PH_0(\C(X,\RR^p))) = 0.
	\]
\end{thm}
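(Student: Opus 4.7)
The plan is to show that every bar in the degree-zero barcode of either filtration has birth time $0$, from which no two bars can form an interacting pair and hence $\Qcodim = 0$.

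First, I would establish that the transition maps $f_{st}$ are surjective in $\PH_0$. Both the $\VR$ and $\C$ constructions keep the vertex set constantly equal to $X$ for every parameter $s \in \RR_{\geq 0}$, so for any $0 \leq s \leq t$, every connected component of the complex at level $t$ contains some point $x \in X$, which also lies in the complex at level $s$ and therefore determines a class in $H_0$ mapping onto the given component. In the interval decomposition of Theorem \ref{thm:CB}, surjectivity of every $f_{st}$ precludes any bar from having birth time $b(I) > 0$, because such a bar would produce a class in $V_{b(I)}$ outside the image of $f_{s, b(I)}$ for $s < b(I)$. Since $X$ is finite, we moreover have $\dim V_0 \leq |X|$, so $V$ is barcode-finite in the sense of Definition \ref{defn:barcode.finite} and $\Qcodim(V)$ is well-defined. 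Every bar is therefore of the form $\langle 0, d \rangle$ for some $d \in [0, \infty]$.

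Second, I would apply Definition \ref{defn:interacting.pair} to any two such bars $I = \langle 0, d \rangle$ and $J = \langle 0, \delta \rangle$ (taking WLOG $d \leq \delta$). Both criteria fail immediately: interlacing requires the strict inequality $b < \beta$, which is impossible since $b = \beta = 0$; and non-separation requires $d < \beta$, which is impossible since $\beta = 0 \leq d$. Hence no pair of bars interacts, and $\Qcodim = 0$ follows from Definition \ref{defn:Qcodim}.

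The main (and essentially only) obstacle in this outline is making the surjectivity claim airtight, which is routine but relies specifically on the fact that the vertex sets of both filtrations are constantly $X$. Corner cases cause no additional difficulty: coincident points in $X$ only shrink $\dim V_0$; an empty $X$ gives an empty barcode; and pointbars at parameters $a > 0$ cannot occur because surjectivity forces $\coker(f_{sa}) = 0$.
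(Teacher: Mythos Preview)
Your proposal is correct and follows essentially the same approach as the paper: both argue that every bar in the degree-zero barcode is born at $s=0$ (the paper phrases this as ``no new $0$-cycles are born as $s$ increases'', you phrase it via surjectivity of the transition maps $f_{st}$), and then conclude that bars sharing a common birth time cannot interact. Your write-up is somewhat more careful---you explicitly verify barcode-finiteness and check both clauses of Definition~\ref{defn:interacting.pair}---but the underlying idea is identical.
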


\begin{proof}
Recall $0$-cycles in simplicial homology correspond to connected components. For $s=0$, in both the VR and \v{C}ech complex, every individual point of $X$ corresponds to the birth of a $0$-cycle. As $s$ increases, connected components merge and $0$-cycles die, but no new $0$-cycles are born. Thus, every pair of bars in the resulting barcode will have the \emph{same} birth time; namely $s=0$. Hence, there are no interacting pairs of bars and $\Qcodim = 0$ in both cases.
\end{proof}

We will now construct a family of point clouds $X_c \subset \RR^2$ indexed by a non-negative integer $c$ and an increasing sequence of real numbers $\epsilon_0 < \epsilon_1 < \cdots < \epsilon_{c+1}$. In the following, we will consider the barcode for degree $d=1$ persistent homology applied to the VR filtration of $X_c$ over $S=\RR_{\geq0}$, and in particular, the numbers $\epsilon_i$ (for $1\leq i\leq c+1$) will be birth times of the desired bars and $\sqrt{2}\,\epsilon_i$ (for $0\leq i \leq c$) will be the death times. See Figures \ref{fig:Rips} and \ref{fig:barcodes} for examples. We further recall $1$-cycles in simplicial homology correspond to graph cycles in the $1$-skeleton of a simplicial complex which are not boundaries of any $2$-simplices.

\begin{figure}
    \centering
    %\framebox{\begin{minipage}{0.45\textwidth}
        \framebox{\includegraphics[width=0.95\textwidth]{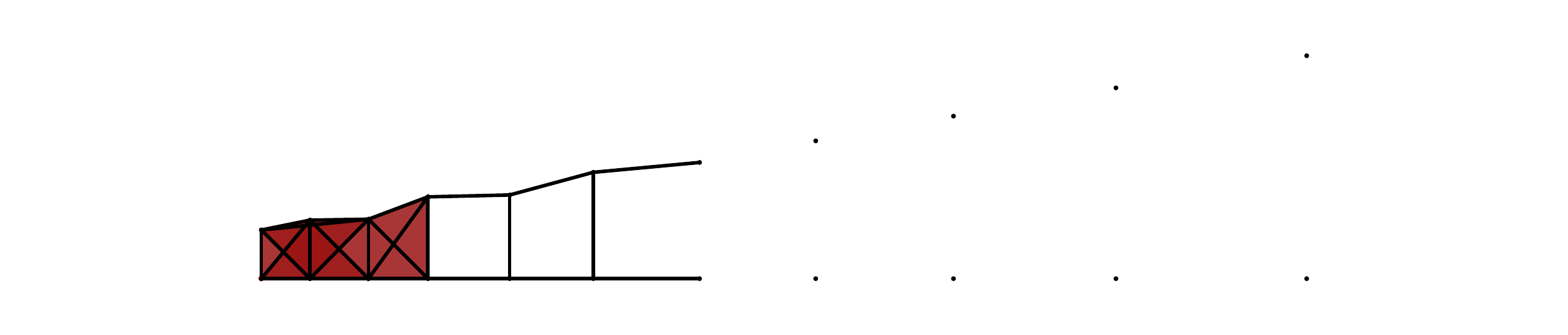}}
    %\end{minipage}}
    %\framebox{\begin{minipage}{0.45\textwidth}
        \framebox{\includegraphics[width=0.95\textwidth]{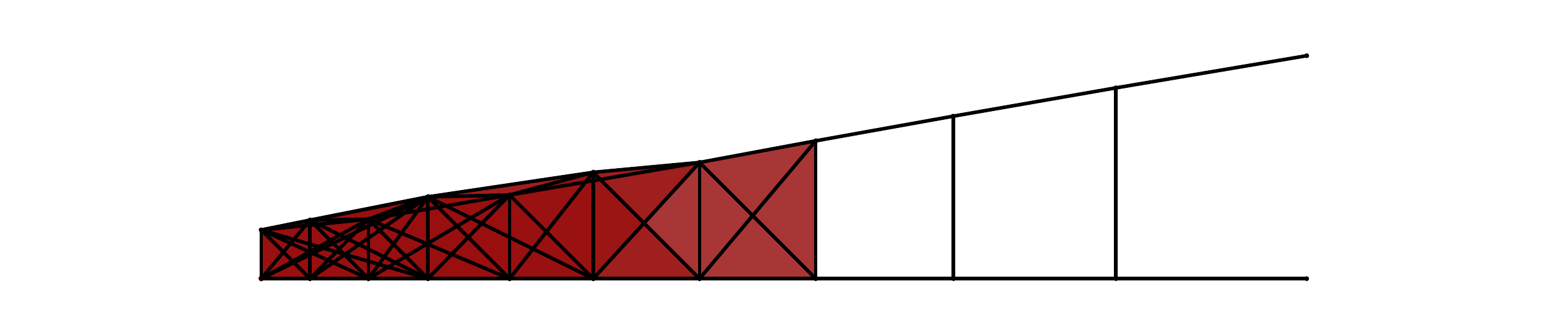}}
    %\end{minipage}}
    \caption{The underlying point cloud has the form $X_c$ with $c=8$. Top: 2-skeleton of the VR complex for $s=4.5$. Bottom: 2-skeleton of the VR complex for $s=8$. Images created with the software \cite{FiltrationDemos}.}
    \label{fig:Rips}
\end{figure}

\begin{figure}
    \centering
    \begin{minipage}{0.49\textwidth}
        \includegraphics[width=0.95\textwidth]{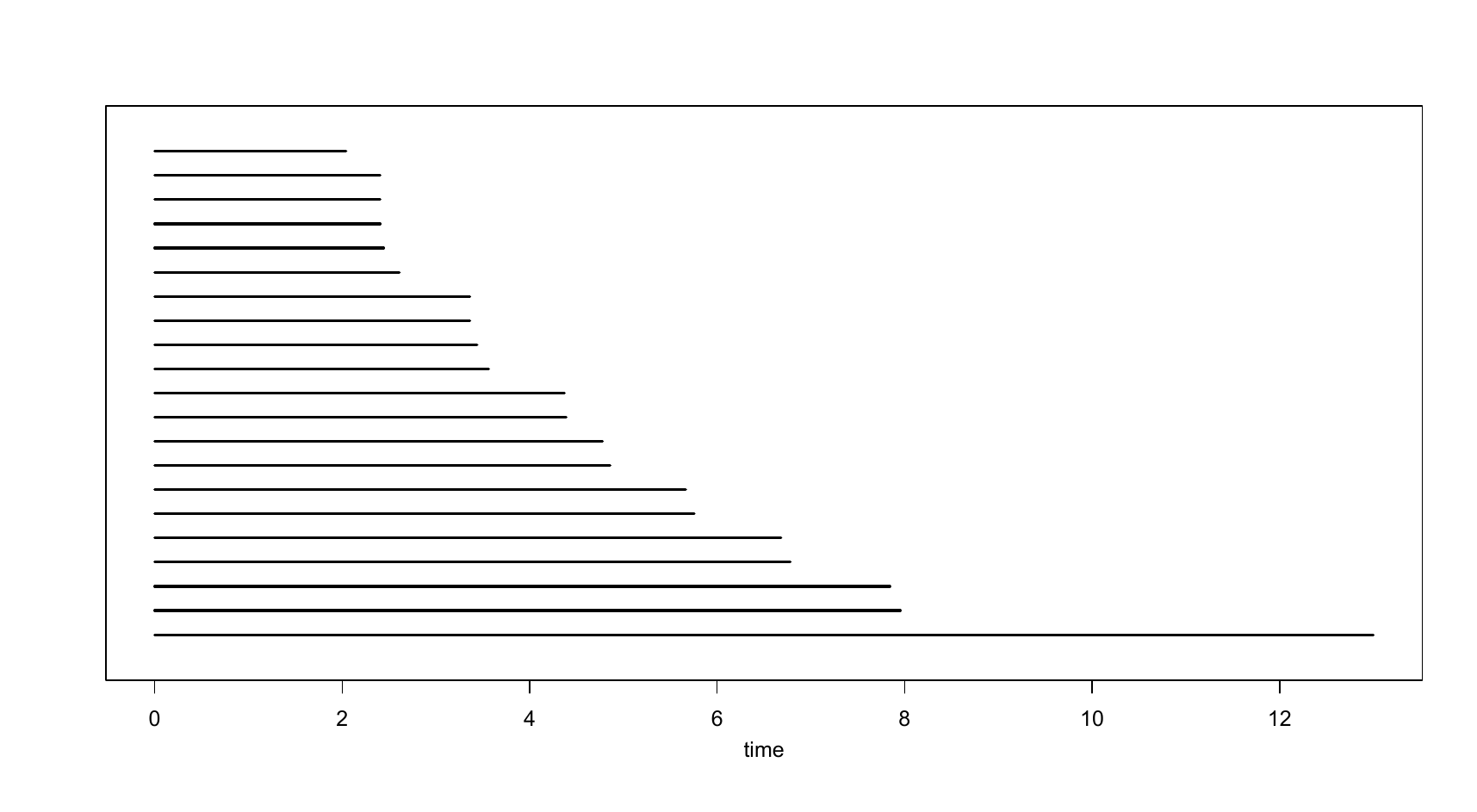}
    \end{minipage}
    \begin{minipage}{0.49\textwidth}
        \includegraphics[width=0.95\textwidth]{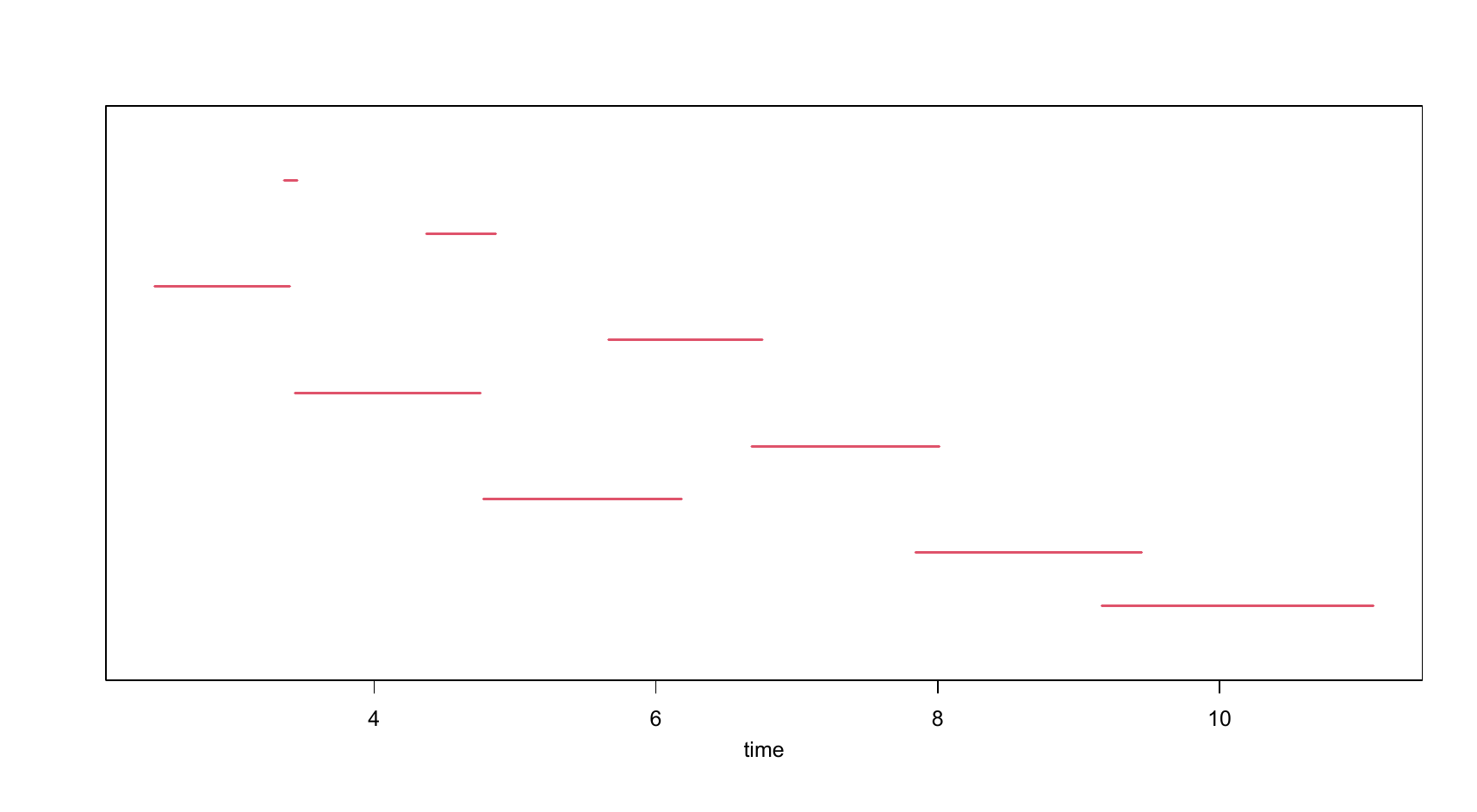}
    \end{minipage}
    \caption{The $d=0$ (left) and $d=1$ (right) barcodes for $\PH_d(\VR(X_8,\RR^2))$ with $X_8$ as in Figure \ref{fig:Rips}. The axis labeled ``time" denotes values from the indexing set $S = \RR_{\geq0}$. Notice that the $d=0$ barcode is an example of the result of Theorem \ref{thm:VR.cech.0}. The images were produced using the TDA package library and GUDHI algorithm in the statistical software, R.}
    \label{fig:barcodes}
\end{figure}

Set $\epsilon_0=1$. We will start with four points in $X_c$: $(0,0)$, $(0,2)$, $(2,0)$, $(2, 2\epsilon_1)$, where $\epsilon_1 \in (\epsilon_0, \sqrt{2}\epsilon_0) = (1,\sqrt{2})$. A $1$-cycle $I_0$ will be born at $\epsilon_1$ and die at $\sqrt{2}\epsilon_0$ in $S$; see the example in Figure \ref{fig:first.cycle}. If $c=0$, we stop here and we have formed $X_0$ with barcode having the single bar $I_0$.
    
If $c\geq 1$, we continue by adding two more points to $X_c$: $(2+2\epsilon_1, 2\epsilon_2)$, $(2+2\epsilon_1, 0)$ for some choice of $\epsilon_2 \in (\epsilon_1,\sqrt{2}\epsilon_0)$. Observe that a new $1$-cycle $I_1$ will be born at $\epsilon_2$ and die at $\sqrt{2}\epsilon_1$. By construction, this guarantees that $I_0$ is interlaced with $I_1$.
    
If $c \geq 2$, %we continue by adding two more points to $X_c$: $(2+2\epsilon_1+2\epsilon_2,0)$ and $(2+2\epsilon_1+2\epsilon_2,2\epsilon_3)$, where $\epsilon_3 \in (\sqrt{2}\epsilon_0,\sqrt{2}\epsilon_1)$. Notice that $\epsilon_3$ and $\sqrt{2}\epsilon_2$ are respectively the birth and death times of a new cycle $I_2$. Moreover, by construction, $I_2$ is interlaced with $I_1$, but not $I_0$.
%
%When $c \geq 3$, 
we inductively continue for each integer $n$ in the range $3 \leq n \leq c+1$, by adding two more points to $X_c$: $(2\sum_{i=0}^{n-1}\epsilon_i,0)$ and $(2\sum_{i=0}^{n-1}\epsilon_i,2\epsilon_n)$ with $\epsilon_n \in (\sqrt{2}\epsilon_{n-3},\sqrt{2}\epsilon_{n-2}) $. For each such $n$, we contribute a new $1$-cycle $I_{n-1}$ having birth time $\epsilon_n$ and death time $\sqrt{2}\epsilon_{n-1}$. Again, by construction $I_{n-1}$ is interlaced with $I_{n-2}$, but does not interact with any $I_j$ for $j < n-2$.

%As an example of our construction outlined in the proof above, we refer the reader back to our Example \ref{ex:ph}. In particular, the dataset in Figure \ref{fig:dataset} is an example of such a point cloud $P\subset \RR^2$ with $n=10$ and one checks that, according to Theorem \ref{thm:codim.pairs}, the corresponding $H_1$ barcode diagram in Figure \ref{fig:barcodes} admits a computation that $\Qcodim_1(P) = 9$.

\begin{figure}
    \centering
    \begin{minipage}{0.45\textwidth}
\begin{center}
 \begin{tikzpicture}
    \node[inner sep=0pt] (A)  [label=below: {$(0,0)$}] at (0,0) {$\bullet$};
    \node[inner sep=0pt] (B) [label=below: {$(2,0)$}] at (2,0) {$\bullet$};
    \node[inner sep=0pt] (C) [label=above: {$(0,2)$}] at (0,2) {$\bullet$};
    \node[inner sep=0pt] (D) [label=above: {$(2,2.6)$}]at (2,2.6) {$\bullet$};

    \coordinate (a) at (0,0);
    \coordinate (b) at (2,0);
    \coordinate (c) at (0,2);
    \coordinate (d) at (2,2.6);

    \draw[red] (A) circle (1.3);
    \draw[red] (B) circle (1.3);
    \draw[red] (C) circle (1.3);
    \draw[red] (D) circle (1.3);

    \draw (A) -- (B) -- (D) -- (C) -- (A);
    \draw[dotted] (B) -- (C);
    \draw[dotted] (A) -- (D);

\end{tikzpicture}
   
\end{center}
\end{minipage}
\begin{minipage}{0.45\textwidth}
\begin{center}
 \begin{tikzpicture}
    \node[inner sep=0pt] (A)  [label=below: {$(0,0)$}] at (0,0) {$\bullet$};
    \node[inner sep=0pt] (B) [label=below: {$(2,0)$}] at (2,0) {$\bullet$};
    \node[inner sep=0pt] (C) [label=above: {$(0,2)$}] at (0,2) {$\bullet$};
    \node[inner sep=0pt] (D) [label=above: {$(2,2.6)$}]at (2,2.6) {$\bullet$};

    \coordinate (a) at (0,0);
    \coordinate (b) at (2,0);
    \coordinate (c) at (0,2);
    \coordinate (d) at (2,2.6);

    \filldraw[fill=black!5] (a) -- (b) -- (d) -- (c) -- cycle;
    \draw (B) -- (C);
    \draw[dotted] (A) -- (D);

    \draw[red] (A) circle (1.415);
    \draw[red] (B) circle (1.415);
    \draw[red] (C) circle (1.415);
    \draw[red] (D) circle (1.415);

\end{tikzpicture}
   
\end{center}
\end{minipage}
    \caption{The depicted data points form the initial $1$-cycle in $X_c$ (here with $\epsilon_1=1.3$). The red circles are boundaries of the balls used to construct the VR complex $(X_c)_{s}$ with $s=\epsilon_1 = 1.3$ on the left and $s=\sqrt{2}$ on the right. The solid black edges depict the $1$-skeleton of the VR complex; dotted lines depict edges not yet included in the VR complex. Filled gray triangles depict the 2-skeleton of the VR complex. Hence, the left picture depicts the birth of the initial $1$-cycle while the right depicts its death.}
    \label{fig:first.cycle}
\end{figure}

\begin{thm}
	\label{thm:VR.1}
$\Qcodim(\PH_1(\VR(X_c,\RR^2))) = c$.
\end{thm}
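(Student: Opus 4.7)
The plan is to combine the explicit construction of $X_c$ with Definition~\ref{defn:Qcodim} in two stages. First, I would identify the barcode of $\PH_1(\VR(X_c,\RR^2))$ as consisting of exactly the $c+1$ bars $I_0,\ldots,I_c$ produced inductively in the text above, where $I_k=\langle \epsilon_{k+1},\sqrt{2}\,\epsilon_k\rangle$; second, I would count the interacting pairs among these bars.

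For the first stage, I would proceed inductively on $n$, showing that each pair of points added at step $n$ contributes exactly one new $1$-cycle with the prescribed birth and death times. In the base case, the four initial points have pairwise distances producing a nontrivial $4$-cycle in the $1$-skeleton at $s=\epsilon_1$; the shorter diagonal (of length $2\sqrt{2}$) fills in and triangulates the cycle at $s=\sqrt{2}=\sqrt{2}\,\epsilon_0$, while the longer diagonal (of length $2\sqrt{1+\epsilon_1^2}>2\sqrt{2}$) is not yet present. For the inductive step, the two points added at step $n$, together with the right-hand pair from step $n-1$, form a quadrilateral of side lengths $2\epsilon_{n-1}$ and $2\epsilon_n$; an analogous diagonal-filling calculation shows that its $4$-cycle is born at $s=\epsilon_n$ and dies at $s=\sqrt{2}\,\epsilon_{n-1}$ when the shorter diagonal (of length $2\sqrt{2}\,\epsilon_{n-1}$) appears.

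The main technical obstacle is confirming that no \emph{spurious} $1$-cycles arise and that no bar dies prematurely from edges or $2$-simplices involving vertices of non-adjacent quadrilaterals. The spacing constraints $\epsilon_n\in(\sqrt{2}\,\epsilon_{n-3},\sqrt{2}\,\epsilon_{n-2})$ are engineered precisely so that throughout the lifespan $[\epsilon_n,\sqrt{2}\,\epsilon_{n-1}]$ of $I_{n-1}$, the distance between any two vertices belonging to non-adjacent quadrilaterals exceeds $2s$, so that the only edges and faces active at the relevant scales lie within a single quadrilateral or between two adjacent ones. This requires a careful planar distance bookkeeping, but once verified it isolates the contributions of each quadrilateral and completes the barcode identification.

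Once the barcode is known, the second stage is purely combinatorial. Writing $b_k=\epsilon_{k+1}$ and $d_k=\sqrt{2}\,\epsilon_k$, the defining inequalities on the $\epsilon_i$ yield the chain
\[
b_0 < b_1 < d_0 < b_2 < d_1 < b_3 < d_2 < \cdots < b_c < d_{c-1} < d_c.
\]
From this, each consecutive pair $(I_k,I_{k+1})$ satisfies $b_k<b_{k+1}\leq d_k<d_{k+1}$ and is therefore interlaced, while for $\ell\geq k+2$ one has $d_k<b_\ell$ strictly; since $S=\RR_{\geq 0}$ is densely ordered, elements of $S$ strictly separate $d_k$ and $b_\ell$, so the pair is neither interlaced nor non-separated, and hence does not interact. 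Summing, exactly $c$ interacting pairs remain, as required by Definition~\ref{defn:Qcodim}.
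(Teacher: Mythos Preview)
Your proposal is correct and follows the same two–stage strategy as the paper: first identify the degree–$1$ barcode of $\VR(X_c,\RR^2)$ as $\{I_0,\ldots,I_c\}$ with $I_k=\langle\epsilon_{k+1},\sqrt{2}\,\epsilon_k\rangle$, then count interacting pairs via Definition~\ref{defn:Qcodim}. The paper's own proof is terser---it simply invokes the phrase ``by construction'' for both the barcode identification and the claim that exactly the consecutive pairs $(I_{k-1},I_k)$ interact---whereas you spell out the birth/death chain $b_0<b_1<d_0<b_2<d_1<\cdots$ explicitly and use the density of $S=\RR_{\geq 0}$ to rule out non-separated interactions for $\ell\geq k+2$. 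Both you and the paper leave the ``no spurious $1$-cycles'' verification as an asserted consequence of the distance bookkeeping rather than a fully written computation, so your proof is at least as complete as the original.
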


\begin{proof}
By construction of $X_c$, the resulting VR filtration will admit a persistent homology barcode in degree $d=1$ with bars $I_0, I_1, \ldots, I_c$ with the property that the interacting pairs are exactly the pairs of the form $(I_{k-1}, I_k)$ for $k = 1,\ldots,c$. The result follows from Definition \ref{defn:Qcodim}.
\end{proof}

Taken together the results of Theorems \ref{thm:VR.cech.0} and \ref{thm:VR.1} say that, while degree $d=0$ persistent homology is uninteresting for VR filtrations from the perspective of the quiver codimension statistic, already in degree $d=1$ every possible non-negative integer can be realized as a quiver codimension.

\subsection{Qcodim from sublevel set filtrations}
In this subsection, we will recall the sublevel set filtrations of Example \ref{ex:sublevel.filt}. In particular, we will construct a family of subsets of $\RR^2$ for which the degree $d=0$ persistent homology of the sublevel set filtration has quiver codimension which can take \emph{any} non-negative integer value. In other words, the quiver codimension of degree $d=0$ persistent homology may still admit interesting information on a filtration outside of the special cases of VR and \v{C}ech filtrations; cf.~Theorem \ref{thm:VR.cech.0}.

Let $c$ be a non-negative integer. Form $c+2$ points $A_i = (i,i) \in \RR^2$ for each integer $i$ in the range $-1\leq i \leq c$. Similarly, form $c+2$ points $B_i = (i,i+2) \in \RR^2$ for $-1\leq i \leq c$. Let $\Sigma_c$ denote the piecewise linear \emph{sawtooth space} obtained by connecting the $2c+4$ points $\{A_i\}\cup \{B_i\}$ via straight line segments in the sequence 
\[A_{-1} \to B_{-1} \to A_0 \to B_0 \to \cdots \to A_c \to B_c \]
see Figure \ref{fig:sawtooth2}. 
\begin{figure}
	\centering
	\begin{minipage}{0.45\textwidth}
	\includegraphics[width = 0.8\textwidth]{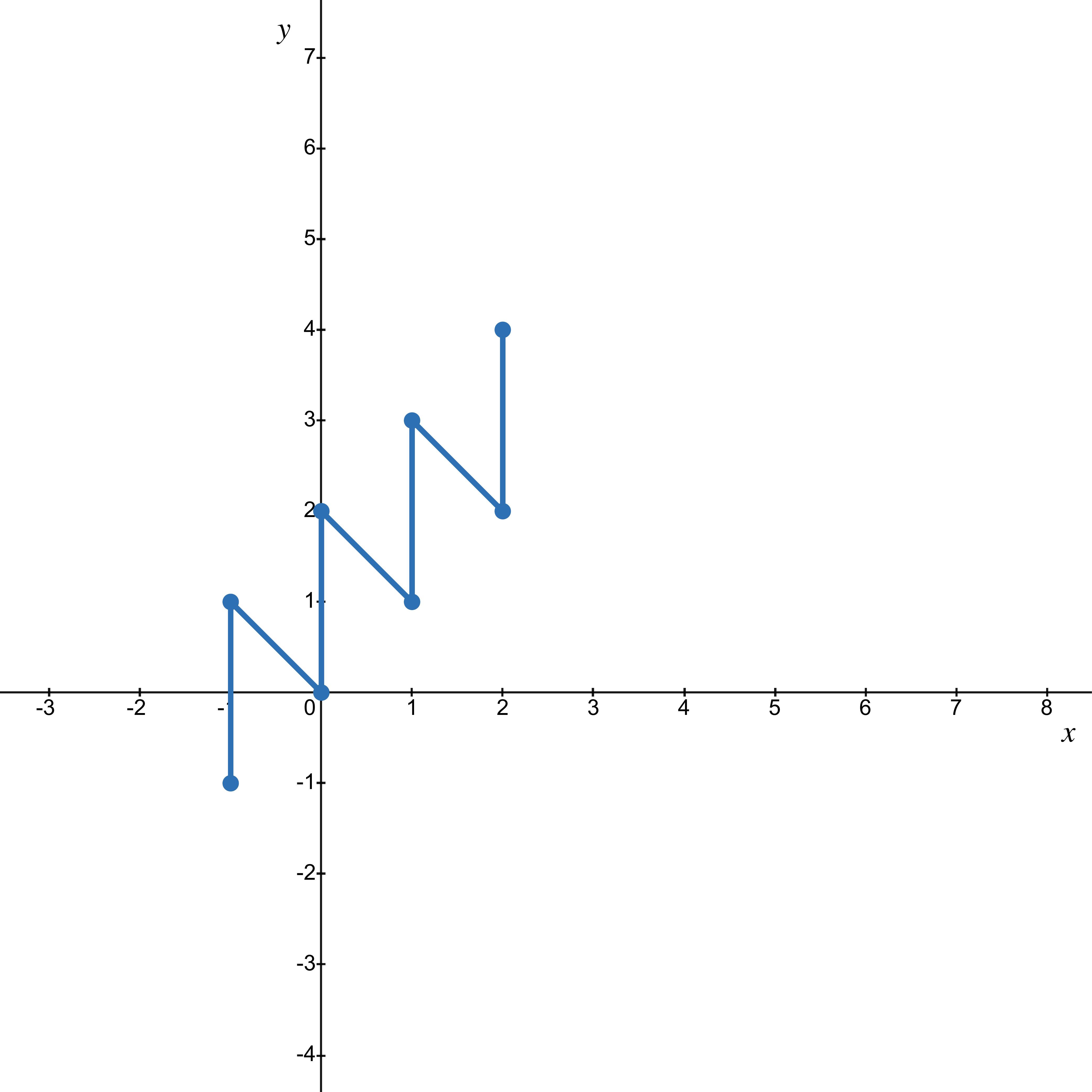}
	\end{minipage}
	\begin{minipage}{0.45\textwidth}
	\includegraphics[width = 0.8\textwidth]{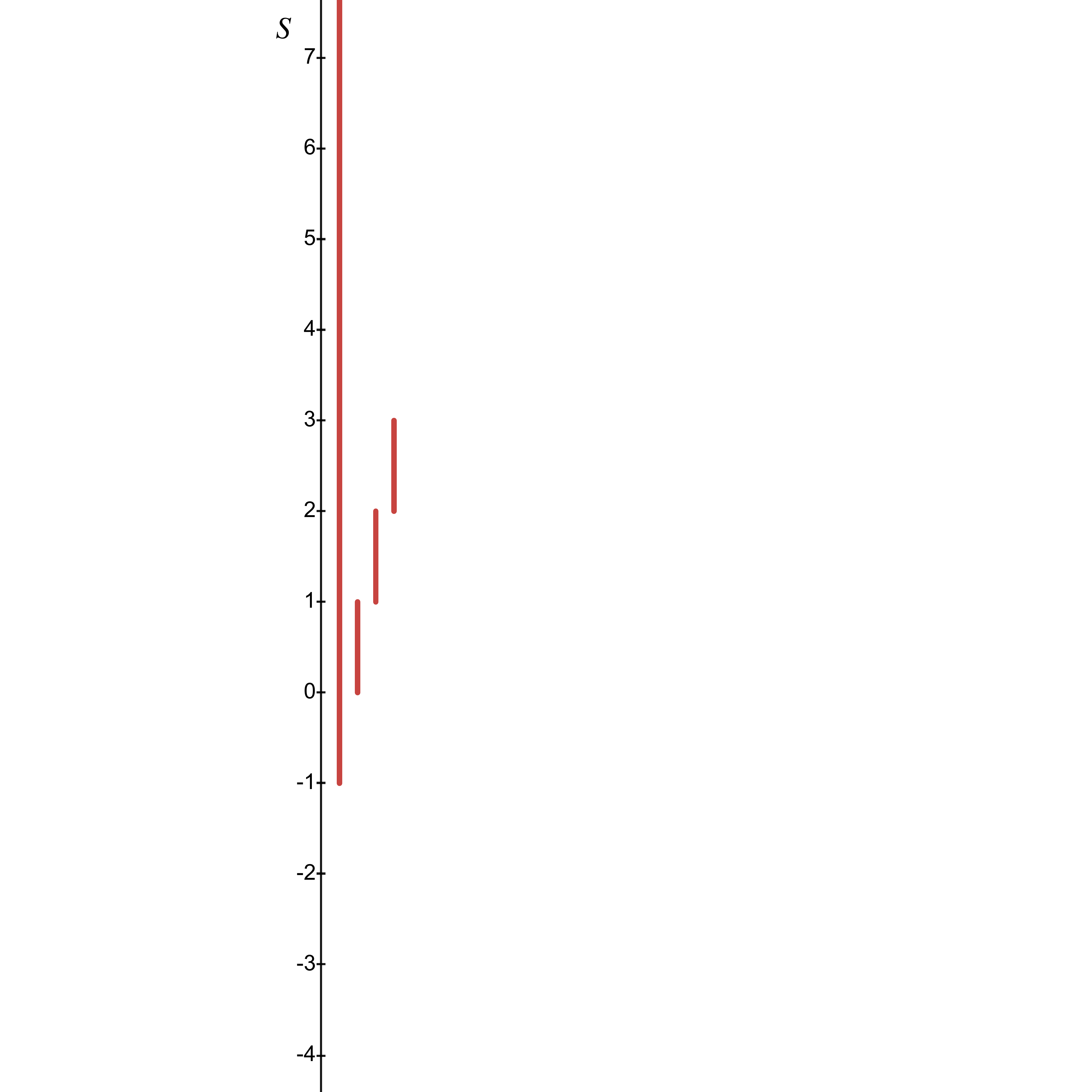}
	\end{minipage}
	\caption{Left: the sawtooth space $\Sigma_2 \subset \RR^2$. Right: the corresponding barcode for $\PH_0(\Sigma_2^g)$.}
	\label{fig:sawtooth2}
\end{figure}
Now, let $g: \RR^2 \to \RR$ denote the projection $(x,y) \mapsto y$ (and its restriction to $\Sigma_c$), and form the sublevel set filtration of the pair $(\Sigma_c, g)$. The degree $d=0$ persistent homology $\PH_0(\Sigma_c^g)$ is an $\RR$-persistence module.

\begin{thm}
$\PH_0(\Sigma_c^g)$ is barcode-finite and $\Qcodim(\PH_0(\Sigma_c^g)) = c$.
\end{thm}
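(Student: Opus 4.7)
The plan is to compute the barcode of $\PH_0(\Sigma_c^g)$ explicitly and then count interacting pairs directly from Definition \ref{defn:interacting.pair}.

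\textbf{Step 1: Analyze the critical structure of $g$ on $\Sigma_c$.} Since $g$ is the height function $(x,y)\mapsto y$, its restriction to the piecewise linear path $\Sigma_c$ is a piecewise linear Morse-type function whose critical values occur exactly at the vertices $\{A_i\}\cup\{B_i\}$. Reading along the path $A_{-1}\to B_{-1}\to A_0\to B_0\to\cdots\to A_c\to B_c$, I would check that the values alternate as $-1,1,0,2,1,3,\ldots,c,c+2$, so the local minima of $g$ on $\Sigma_c$ are exactly the points $A_i$ (with $g(A_i)=i$ for $-1\le i\le c$) and the local maxima are exactly $B_{-1},B_0,\ldots,B_{c-1}$ together with the endpoint $B_c$ (with $g(B_j)=j+2$).

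\textbf{Step 2: Compute the barcode via the elder rule.} Sweeping $s$ upward through $\RR$, each local minimum creates a new connected component of the sublevel set, and each interior local maximum $B_j$ (for $-1\le j\le c-1$) merges two components. The endpoint maximum $B_c$ does not induce a merge. Applying the standard elder rule for $\PH_0$ of sublevel set filtrations (see, e.g., the treatment in \cite{Ghr}), $A_{-1}$ gives an infinite bar, and for each $k$ with $0\le k\le c-1$ the minimum $A_k$ pairs with the maximum $B_{k-1}$ at height $k+1$, while $A_c$ pairs with $B_{c-1}$ at height $c+1$. This yields the barcode
    \[
        I_{-1}=\langle -1,\infty\rangle, \qquad I_k=\langle k, k+1\rangle \text{ for } 0\le k\le c,
    \]
a total of $c+2$ bars. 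In particular, $\PH_0(\Sigma_c^g)$ is barcode-finite.

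\textbf{Step 3: Count interacting pairs.} Because $S=\RR$ is dense, no two distinct real numbers are adjacent, so no pair of bars can be non-separated (Definition \ref{defn:interacting.pair}(ii)). It therefore suffices to enumerate interlaced pairs. For the infinite bar $I_{-1}$ paired against any finite bar $I_k=\langle k,k+1\rangle$ with $0\le k\le c$, we have $-1<k$ but $\infty\not<k+1$, so $I_{-1}$ is never interlaced (it properly contains each $I_k$). For two finite bars $I_j$ and $I_k$ with $0\le j<k\le c$, the interlacing inequality $j<k\le j+1<k+1$ forces $k=j+1$. Hence the interacting pairs are precisely $(I_j,I_{j+1})$ for $j=0,1,\ldots,c-1$, giving exactly $c$ of them, so $\Qcodim(\PH_0(\Sigma_c^g))=c$ by Definition \ref{defn:Qcodim}.

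\textbf{Expected obstacle.} The computational content is light; the main subtlety is justifying Step 2 rigorously rather than waving at ``elder rule.'' I would either (i) invoke the standard correspondence between $\PH_0$ of a sublevel set filtration on a 1-complex and pairings of extrema of a piecewise linear function, or (ii) verify the barcode by hand: before $s=-1$ there are no components; at each $s=k$ with $-1\le k\le c$ a new component is born at $A_k$; and at each $s=k+1$ with $0\le k\le c-1$ the segments $A_{k}B_{k-1}$ and $B_{k-1}A_{k-\text{prev}}$ close up so that the component containing $A_k$ merges with the ever-growing component containing $A_{-1}$, killing the younger class. A brief remark should confirm that the ``open vs.\ closed endpoint'' convention used in Definition \ref{defn:interacting.pair} does not affect the count, since the interlacing/non-separated criteria depend only on the extended-real values of $b(I)$ and $d(I)$.
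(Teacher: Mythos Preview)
Your proposal is correct and follows essentially the same approach as the paper: compute the barcode of $\PH_0(\Sigma_c^g)$ explicitly as $\langle -1,\infty\rangle,\langle 0,1\rangle,\ldots,\langle c,c+1\rangle$ and then count interacting pairs directly. Your write-up is slightly more detailed (explicit use of the elder rule, the observation that density of $\RR$ rules out non-separated pairs, and the check that $I_{-1}$ contains each $I_k$), but the argument is the same.
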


\begin{proof}
For $s<-1$, the sublevel set $(\Sigma_c)_s$ is empty. At $s=-1$, a connected component is born which persists as $s\to \infty$. However, at each integer $m\in S$ with $0\leq m \leq c$, a new connected component is born (at the southern tip of a sawtooth), but which subsequently dies at $m+1 \in S$ (by coinciding with the persistent connected component born at $s=-1$). Hence, the resulting barcode consists of a sequence of bars $\langle -1,\infty \rangle ,\langle 0,1 \rangle ,\langle 1,2 \rangle ,\ldots,\langle c,c+1 \rangle $. The interacting pairs are exactly those of the form $(\langle k-1,k \rangle ,\langle k,k+1 \rangle )$ for integers $k$ in the range $1 \leq k \leq c$. The result follows from Definition \ref{defn:Qcodim}.
\end{proof}

\section{Discussion and future work}
    \label{s:Discuss.Future}

\subsection{Interpretation of quiver codimension as a statistic in TDA}
	\label{ss:interpret}
The calculation of $\Qcodim$ by counting pairs of interacting bars admits the following intuitive meaning in the setting of PH as a tool for \emph{topological data analysis} (TDA). 

Consider the topological features of the underlying data (measured by bars in the barcode) at many different possible ``scales" (measured by the birth and death times in the indexing set $S$). A positive codimension; i.e., \emph{interaction} between ``features" in the sense of Definition \ref{defn:interacting.pair}, means that the features persist at scales in $S$ which are \emph{different} (they necessarily must have distinct birth and death times) but they \emph{overlap}. Said another way, the larger the quiver codimension associated to the (PH of some filtration on the) data, the less likely we expect to be able to isolate the topological features of the data into separable scales.

\subsection{Distribution of quiver codimension}
Consider the case of a fixed metric space $(M,\rho)$ and a fixed positive integer $N$. Suppose we take finite \emph{random samples} of point clouds $X = \{x_i\}_{i=1}^N$ from $M$. Now, fix an integer $d\geq 0$ and suppose we consider $\Qcodim(\PH_d(\VR(X,M)))$ as a sample statistic. We can ask 
	\begin{question}
	How is the $\Qcodim$ statistic distributed? 
	\end{question}
Posing this question certainly depends on the structure of $M$ and a compatible formulation of ``random sample from $M$"; nonetheless, an answer to the question (or an analogous question for other filtrations) sheds light on the relationship between PT-stability and RT-stability. In particular, given the qualitative interpretations of the previous subsection, we might reasonably expect a unimodal distribution clustered just above the number of scales at which the major topological features interact in the population space $M$ (since most samples should capture the major features, and ``noise" can only add to the codimension but not subtract).  PT-stability ensures that the barcode should be stable relative to small perturbations in the data which we may expect from random sampling. RT-stability ensures that $\Qcodim$ is well-defined and computable on each sample, but to what extent does ``noise" in the barcode affect the variation of $\Qcodim$?

\subsection{``Higher" algebro-geometric invariants}

Let $\Omega \subset \Rep$ denote the isoclass of a given quiver representation. By definition, the closure $\bar{\Omega}$ is a $G_\alpha$-stable subvariety of the vector space $\Rep$ and furthermore defines an equivariant cohomology class $[\bar{\Omega}] \in H^*_{G_\alpha}(\Rep)$ \cite{B-F}. Since $\Rep$ is equivariantly contractible, $H^*_{G_\alpha}(\Rep)$ is isomorphic to a polynomial ring in the Chern classes of $G_\alpha$ and $[\bar{\Omega}]$ is called the \emph{quiver polynomial}. This polynomial invariant has been well-studied and exhibits remarkable combinatorial, geometric, stability, and positivity properties. One important property in the present context is that the quiver polynomial is homogeneous of degree equal to $\codim(\Omega)$; see e.g., \cite{B-F,BR,KMS,Rim}.

\begin{question}
    Does the quiver polynomial exhibit RT-stability or only its degree?
\end{question}

That is, it would be interesting to consider an extension of the quiver polynomial to general persistence modules which can be approximated by quiver representations in the sense of Definition \ref{defn:h.approx}. Furthermore, there is a plethora of characteristic classes associated to an isoclass $\Omega$; e.g., the Chern--Schwartz--Macpherson (CSM) and Segre--Schwartz--Macpherson (SSM) classes in cohomology as well as their K-theoretic, and elliptic-cohomology versions; see e.g., \cite{FRW} for a survey on the current state of the art. These ``higher'', non-homogeneous invariants typically recover the quiver polynomial as a specialization to their lowest degree term. The SSM classes may be of particular interest in data analysis, as they can satisfy a normalization property akin to a probability distribution; see, e.g., \cite[Remark 8.8]{FR}.

\section*{Acknowledgements}
We would like to acknowledge support from the Wake Forest University UReCA (\textbf{U}ndergraduate \textbf{Re}search and \textbf{C}reative \textbf{A}ctivities) Center for their award of a Wake Forest Research Fellowship to the second author during the summer of 2022 when this work was initiated. We would also like to thank Max Wakefield for providing us with introductory notes and thoughts on the subject of persistent homology, and Nicole Dalzell for suggestions, tips, and support with the statistical software package, R. Finally, we thank the anonymous referee for many useful comments and suggestions to improve the structure and exposition of this paper, particularly those regarding the general theory of persistence.

\bibliographystyle{plainurl}
\bibliography{BarcodeCodimBib}
\end{document}